\documentclass[reqno]{amsart}
\usepackage{amsfonts}
\usepackage{amsmath}
\usepackage{amssymb}
\usepackage{mathrsfs}
\usepackage[colorlinks]{hyperref}
\usepackage{amsmath,tikz}
\usetikzlibrary{matrix}
\usepackage{algorithm}
\usepackage{algpseudocode}  
\usepackage{algorithmicx}
\usepackage{multirow}
\usepackage{graphicx,lscape}
\usepackage[shortlabels]{enumitem}
\graphicspath{}

\begin{document}
\title[\hfil Backward problem for time-fractional pseudoparabolic equation]
{A backward problem for the time-fractional pseudo-parabolic equation with a variable coefficient}


\author[Arshyn Altybay]{Arshyn Altybay}
\address{
Arshyn Altybay: 
\endgraf
Institute of Mathematics and Mathematical Modeling,
\endgraf
 125 Pushkin str., 050010 Almaty, Kazakhstan
\endgraf
International Engineering and Technological University,
\endgraf
 89/21 Al-Farabi Avenue,  Almaty 050060, Kazakhstan
\endgraf
\endgraf  
    al--Farabi Kazakh National University
\endgraf
  71 al--Farabi ave., 050040 Almaty, Kazakhstan
  \endgraf
{\it E-mail address} {\rm arshyn.altybay@gmail.com, arshyn.altybay@math.kz}
}

\thanks{}
 \keywords{backwards-in-time problem, time-fractional pseudo-parabolic equation, Volterra integral equations, Tikhonov regularisation, numerical analysis}

\begin{abstract}
This work addresses an inverse reconstruction task for a time-fractional pseudo-parabolic model with a temporally varying coefficient. By imposing Dirichlet boundary conditions, we aim to recover the unknown initial state from observations collected at the final time.

From a theoretical perspective, we derive existence and uniqueness results by proving that, under suitable hypotheses, the problem admits a unique solution. Computationally, we introduce a finite-difference discretisation based on a time-stepping strategy and provide a detailed stability and convergence analysis. Leveraging the resulting forward solver, we then formulate an initial-data identification procedure using Tikhonov regularisation. The proposed approach is validated with numerical simulations, and its resilience is assessed via experiments that incorporate perturbations in the final-time measurements.
\end{abstract}

\maketitle
\numberwithin{equation}{section} 
\newtheorem{theorem}{Theorem}[section]
\newtheorem{lemma}[theorem]{Lemma}
\newtheorem{corollary}[theorem]{Corollary}
\newtheorem{remark}[theorem]{Remark}
\newtheorem{definition}[theorem]
{Definition}
\newtheorem{proposition}[theorem]
{Proposition}
\allowdisplaybreaks


\section{Introduction}\label{sec:intro}

Pseudo-parabolic equations (Sobolev-type equations) are a mathematical model for a wide class of important physical and mechanical applications, such as filtration processes in porous media \cite{BER_1989}, population aggregation dynamics \cite{Pad_2004}, nonlinear long-wave propagation \cite{Ting_1963, BBM_1972}, second-order unsteady fluid flows \cite{Hui_1968}, fluid transport in fractured rocks \cite{BZK_1960}, and non-Newtonian fluid motion \cite{AKS_2011, ZVY_2010}. The inverse problems for a linear, semilinear and nonlinear pseudoparabolic equation, both by itself and with variable coefficients, have been extensively investigated; a representative selection of related works can be found in \cite{AA_1997}--\cite{Bockstal_2026}.

In this work, we study a backward problem for a time-fractional pseudo-parabolic equation
\begin{equation}\label{1.1}
\partial_t^\alpha u(x,t)-u_{xx}(x,t)-\mu(t)\,u_{xxt}(x,t)=f(x,t),
\qquad (x,t)\in(0,l)\times(0,T),
\end{equation}
subject to the Dirichlet boundary conditions
\begin{equation}\label{1.2}
u(0,t)=u(l,t)=0,\qquad t\in[0,T],
\end{equation}
and the final-time measurement
\begin{equation}\label{finalT}
u(x,T)=\psi(x),\qquad x\in[0,l].
\end{equation}
Here, $u(x,t)$ denotes the temperature, $f(x,t)$ is a given source term,
$\mu(t)$ is a pseudo-parabolic coefficient, and $\psi$
is the measured final-time state. The operator $\partial_t^\alpha$ denotes the Caputo fractional derivative of order $\alpha\in(0,1)$\cite{Kilbas06}. 
The backwards-in-time problem consists of reconstructing the initial state
\begin{equation}\label{initial}
u_0(x):=u(x,0),\qquad x\in[0,l],
\end{equation}
and, consequently, the whole trajectory $u$ on $[0,T]$ from the data $(f,\psi)$.

The equation \eqref{1.1} models a time-fractional pseudo-parabolic diffusion process with memory and relaxation effects, describing anomalous transport phenomena influenced by time-dependent material properties and external forcing in the space-time domain $(0,l)\times(0,T)$. 
In engineering applications, backwards-in-time problems are essential because they focus on reconstructing past physical states using data observed at a later time.

A pioneering contribution to the well-posedness analysis of the backward-in-time problem for the time-fractional diffusion equation is given in \cite{Sakamoto_2011}. Further developments for backward problems in fractional diffusion-wave models include theoretical and numerical studies in bounded domains \cite{WZ_2018}, as well as iterative regularisation approaches (e.g., Landweber-type methods) for identifying unknown initial data in time-space fractional settings \cite{YZL_2020}. General well-posedness frameworks for backwards-in-time problems for time-fractional diffusion and diffusion-wave equations were also established in \cite{Floridia_2020, Yamamoto_2020}. In addition, simultaneous recovery of multiple initial values for time-fractional diffusion–wave equations was investigated in \cite{ZVZ_2021}.

For fractional pseudo-parabolic equations, backward problems and related stability issues have been analysed in \cite{LKHC_2020, LZS21}. Regularisation methods for reconstructing the initial value in time-fractional pseudo-parabolic models were developed in \cite{YXJ_2022}, and robustness under random noise was further studied in \cite{DR_2023}. Related inverse initial data problems for time-fractional pseudo-hyperbolic equations were considered in \cite{ST_2024}.

Despite recent progress, backwards-in-time analysis for time-fractional pseudo-parabolic equations remains underexplored, particularly in terms of well-posedness theory, numerical reconstruction, and noisy data treatment.
To address this deficiency, we undertake a comprehensive theoretical and numerical investigation of the problem.

Compared with the constant-coefficient case, the present variable-coefficient problem has an essentially different spectral structure. If $\mu$ is constant, the Fourier modes may be studied by using coefficient-independent damping factors or special-function representations. In contrast, when $\mu=\mu(t)$, the $k$-th Fourier coefficient satisfies
\[
\partial_t^\alpha u_k(t)+\mu(t)\lambda_k u_k'(t)+\lambda_k u_k(t)=f_k(t),
\]
where the coefficient of $u_k'$ depends on time. Therefore, the backward
reconstruction cannot be reduced to a simple explicit multiplier formula.
Instead, each mode must be analysed through a weakly singular Volterra equation with a time-dependent kernel. The main technical point is to obtain estimates that are uniform with respect to the Fourier mode $k$, in particular, the positivity and lower boundedness of the reconstruction factor $\mathcal A_k(T)$.

The main contributions of this paper are as follows. First, we establish classical solvability of the backward problem under explicit regularity and compatibility assumptions on the final data and source term. Second, by reducing
the Fourier modes to Volterra equations with time-dependent kernels, we prove the positivity of the reconstruction factor and derive an $L^2$-stability estimate for the recovered initial state. Third, we construct a fully discrete
finite-difference solver based on the graded-mesh L1 approximation and central spatial differences, and combine it with Tikhonov regularisation for stable initial-state reconstruction from noisy final-time data.

The remainder of the manuscript is structured as follows:
Section~\ref{sec:wp} establishes the existence and uniqueness of the backward problem. Section~\ref{sec:direct_num} develops and analyses a stable numerical scheme for the direct model. Section~\ref{sec:inverse_num} presents the reconstruction procedure for identifying the unknown initial state from finite-time measurements. Numerical validations, including tests with noisy data, are given in Section~\ref{sec:numexp}.

\section{Existence and Uniqueness of the problem}\label{sec:wp}

In this section, we establish the existence and uniqueness of a solution to the backwards-in-time problem for the time fractional pseudo-parabolic equation \eqref{1.1} subject to the Dirichlet boundary conditions \eqref{1.2} and the final-time measurement \eqref{finalT}.

\subsection{Definition and assumptions}
\begin{definition}\label{def:frac}
A pair $(u,u_0)$ is called a \emph{classical solution} of \eqref{1.1}--\eqref{finalT} if
\[
u\in C\big([0,T];C([0,l])\big)\cap C^1\big((0,T);C^2([0,l])\big),\qquad
u_0\in C([0,l]),
\]
and, in addition,
\[
\partial_t^\alpha u\in C\big((0,T)\times[0,l]\big),
\]
where the Caputo derivative is defined by
\[
\partial_t^\alpha u(x,t)
=\frac{1}{\Gamma(1-\alpha)}\int_0^t (t-s)^{-\alpha}\,\partial_s u(x,s)\,ds,
\qquad (x,t)\in[0,l]\times(0,T).
\]
Moreover, $u(\cdot,0)=u_0$ on $[0,l]$, $u(0,t)=u(l,t)=0$ for $t\in[0,T]$,
$u(\cdot,T)=\psi$ on $[0,l]$, and \eqref{1.1} holds pointwise on $(0,l)\times(0,T)$.
\end{definition}

\begin{remark}
The condition $u_0(0)=u_0(l)=0$ is a compatibility condition inherited from
the homogeneous Dirichlet boundary condition $u(0,t)=u(l,t)=0$. Indeed, for a
classical solution continuous up to $t=0$, taking $t\to0^+$ in the boundary
condition gives $u(0,0)=u(l,0)=0$, and hence $u_0(0)=u_0(l)=0$. Thus, this
condition is not an additional measurement assumption but a natural
compatibility requirement for the classical solution concept.
\end{remark}

\textbf{Assumptions.}
Throughout this section, we assume:

\begin{enumerate}[\textbf{F}1,leftmargin=2.4cm]
\item \label{F1}
$\alpha\in(0,1)$, $\mu\in C([0,T])$, and there exists $\mu_0>0$ such that
$\mu(t)\ge \mu_0$ for all $t\in[0,T]$.

\item \label{F2}
$\psi\in H^3(0,l)$ and
\[
\psi(0)=\psi(l)=0,\qquad \psi''(0)=\psi''(l)=0.
\]

\item \label{F3}
$f\in L^2\big(0,T;H^3(0,l)\big)$ and for a.e. $t\in(0,T)$,
\[
f(0,t)=f(l,t)=0,\qquad f_{xx}(0,t)=f_{xx}(l,t)=0.
\]
\end{enumerate}

For uniform-in-time estimates of the time-derivative series, we shall also use:
\begin{enumerate}[\textbf{F}4,leftmargin=2.4cm]
\item \label{F4}
For every $\delta\in(0,T)$, $f\in L^\infty(\delta,T;H^3(0,l))$.
\end{enumerate}

\begin{remark}
The assumptions \textup{F2}--\textup{F4} are imposed in order to obtain a classical solution and to justify termwise differentiation of the Fourier series. In particular, the $H^3$-regularity and the compatibility conditions ensure uniform convergence of the series for $u$, $u_{xx}$, and, away from
$t=0$, the series for $u_t$, $u_{xxt}$, and $\partial_t^\alpha u$.
These assumptions are sufficient for the classical solvability result proved below. We do not claim that they are optimal. Under weaker assumptions, one may expect mild or weak solutions, but this is outside the scope of the present work.
\end{remark}

\subsection{Existence and uniqueness}
\begin{theorem}[Existence, uniqueness, and explicit reconstruction]\label{thm:wellposed_frac}
Let \ref{F1}--\ref{F4} hold. Then the inverse problem \eqref{1.1}--\eqref{finalT}
admits a unique classical solution $(u,u_0)$ in the sense of Definition~\ref{def:frac}.
Moreover, $(u,u_0)$ is given by
\begin{align}
u(x,t)
&=\sum_{k=1}^\infty
\Bigg[
\frac{\mathcal{A}_k(t)}{\mathcal{A}_k(T)}\,\psi_k
+\mathcal{B}_k(t)-\frac{\mathcal{A}_k(t)}{\mathcal{A}_k(T)}\,\mathcal{B}_k(T)
\Bigg]e_k(x),
\qquad (x,t)\in[0,l]\times[0,T],
\label{u_frac_compact_thm}\\[4pt]
u_0(x)
&=\sum_{k=1}^\infty
\Bigg[
\frac{\psi_k-\mathcal{B}_k(T)}{\mathcal{A}_k(T)}\Bigg]e_k(x),
\qquad x\in[0,l],
\label{phi_frac_compact_thm}
\end{align}
where
\[
e_k(x)=\sin\Big(\frac{k\pi}{l}x\Big),\qquad
\lambda_k=\Big(\frac{k\pi}{l}\Big)^2,\qquad k=1,2,\dots,
\]
$\psi_k$ and $f_k$ are defined in \eqref{coeff:psi:f}, and $\mathcal A_k,\mathcal B_k$ are defined by
\eqref{def:AkBk_frac} below. In particular, Lemma~\ref{lem:Ak_positive} yields $\mathcal A_k(T)>0$,
so the coefficients in \eqref{u_frac_compact_thm}--\eqref{phi_frac_compact_thm} are well-defined.

Furthermore:
\begin{enumerate}[(i)]
\item The series \eqref{u_frac_compact_thm} converges uniformly on $[0,l]\times[0,T]$ and
      the series for $u_{xx}$ converges uniformly on $[0,l]\times[0,T]$.
\item For every $\delta\in(0,T)$ the series for $u_t$, $u_{xxt}$ and
$\partial_t^\alpha u$ converge uniformly on $[0,l]\times[\delta,T]$. 
\end{enumerate}
\end{theorem}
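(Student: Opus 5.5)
We expand in the Dirichlet eigenbasis $\{e_k\}$ and reduce everything to the mode-wise problems. Writing $u(x,t)=\sum_k u_k(t)e_k(x)$, the $k$-th coefficient must satisfy
\[
\partial_t^\alpha u_k(t)+\mu(t)\lambda_k u_k'(t)+\lambda_k u_k(t)=f_k(t),\qquad u_k(T)=\psi_k .
\]
We treat $u_k(0)=\varphi_k$ as an unknown parameter. By linearity, $u_k=\varphi_k\mathcal A_k+\mathcal B_k$, where $\mathcal A_k$ solves the homogeneous mode equation with $\mathcal A_k(0)=1$, and $\mathcal B_k$ solves the forced equation with $\mathcal B_k(0)=0$. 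This is how we read \eqref{def:AkBk_frac}: each is the unique continuous solution of a weakly singular Volterra equation, obtained by dividing by $\mu(t)\lambda_k\ge\mu_0\lambda_k>0$ and integrating. Imposing $u_k(T)=\psi_k$ together with $\mathcal A_k(T)>0$ from Lemma~\ref{lem:Ak_positive} forces
\[
\varphi_k=\frac{\psi_k-\mathcal B_k(T)}{\mathcal A_k(T)}.
\]
This yields \eqref{u_frac_compact_thm}--\eqref{phi_frac_compact_thm}.

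\emph{Uniqueness.} Take the difference of two classical solutions and form its Fourier coefficients by integrating against $e_k$ on $(0,l)$. Integration by parts, using the Dirichlet conditions, shows that each coefficient solves the homogeneous mode problem with zero final value. It is therefore $\varphi_k\mathcal A_k$ with $\varphi_k\mathcal A_k(T)=0$, so $\varphi_k=0$ and the coefficient vanishes; completeness of $\{e_k\}$ gives $u\equiv0$.

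\emph{Existence.} The core task is to obtain bounds on $\mathcal A_k,\mathcal B_k$ and their derivatives that are uniform in $k$:
\begin{enumerate}[(a)]
\item $0<c_*\le\mathcal A_k(t)\le1$ on $[0,T]$, with $c_*$ independent of $k$; this is the lower bound supplied by Lemma~\ref{lem:Ak_positive}.
\item $|\mathcal B_k(t)|\le C\lambda_k^{-1}\|f_k\|_{L^2(0,T)}$.
\item $|\mathcal A_k'(t)|+|\mathcal B_k'(t)|\lambda_k^{0}\le C_\delta$ on $[\delta,T]$, with $C_\delta$ uniform in $k$ when $\mathcal B_k'$ is measured against $\|f_k\|_{L^\infty(\delta,T)}$ (using F4).
\end{enumerate}
For (b) and (c), we rewrite the mode equation as
\[
u_k'=-\mu^{-1}u_k+(\mu\lambda_k)^{-1}\bigl(f_k-\partial_t^\alpha u_k\bigr).
\]
The leading dynamics are $u_k'\approx-\mu^{-1}u_k$, which is $k$-independent. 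The fractional term carries the factor $\lambda_k^{-1}$ and is therefore a small perturbation, so a Gronwall argument for weakly singular kernels (Henry's inequality) gives constants independent of $k$.

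\emph{Convergence.} With these bounds, each $|u_k(t)|$ is controlled by $C(|\psi_k|+\lambda_k^{-1}\|f_k\|)$. F2 and F3 give $\sum_k\lambda_k^{3/2}|\psi_k|^2<\infty$ and the corresponding bound for $f$, because the compatibility conditions allow three integrations by parts without boundary terms. By Cauchy--Schwarz,
\[
\sum_k\lambda_k|u_k(t)|\le\Bigl(\sum_k\lambda_k^{3/2}|u_k|^2\Bigr)^{1/2}\Bigl(\sum_k\lambda_k^{1/2}\Bigr)^{1/2}.
\]
This needs care: the second factor diverges. We will use instead $\sum_k k^2|u_k|\le\bigl(\sum_k k^6|u_k|^2\bigr)^{1/2}\bigl(\sum_k k^{-2}\bigr)^{1/2}<\infty$, which is exactly $H^3$ with the compatibility conditions. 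The Weierstrass M-test then gives uniform convergence of the series for $u$ and $u_{xx}$ on $[0,l]\times[0,T]$, which is (i).

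On $[\delta,T]$, bound (c) yields $|u_k'|\le C_\delta(|\psi_k|+\|f_k\|_{L^\infty(\delta,T)})$, which is summable against $\lambda_k$ by F4. This gives uniform convergence of the series for $u_t$ and $u_{xxt}$. Uniform convergence of $\partial_t^\alpha u$ then follows from the mode equation itself, since $\partial_t^\alpha u_k=f_k-\lambda_k u_k-\mu\lambda_k u_k'$ and each term on the right has been controlled. This gives (ii), and with it the verification that \eqref{1.1} holds pointwise and that $u(\cdot,T)=\psi$.

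\emph{Main obstacle.} The hardest step is the $k$-uniform control of $\mathcal A_k'$ and $\mathcal B_k'$ near $t=0$. The Caputo memory term couples past values, and termwise differentiation must be justified. We will first try to bound $\lambda_k\,\partial_t^\alpha u_k$ via its Volterra representation $\partial_t^\alpha u_k=\Gamma(1-\alpha)^{-1}\int_0^t(t-s)^{-\alpha}u_k'(s)\,ds$. Substituting this into the equation for $u_k'$ yields a second-kind Volterra equation for $u_k'$ whose kernel has size $(\mu_0\lambda_k)^{-1}(t-s)^{-\alpha}$. The resolvent of this kernel is bounded uniformly in $k\ge1$ because its norm decreases in $k$.
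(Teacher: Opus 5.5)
Your route is the paper's: reduce each mode to a weakly singular Volterra equation, split $u_k=\varphi_k\mathcal A_k+\mathcal B_k$, invert through $\mathcal A_k(T)>0$, get uniqueness from $\chi_k\mathcal A_k(T)=0$, apply Weierstrass M-tests, and read $\partial_t^\alpha u$ off the mode equation. The genuine gap is your item (a).

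Lemma~\ref{lem:Ak_positive} is a first-hitting-time argument for each fixed $k$. It gives $\mathcal A_k(T)>0$ but says nothing about $\inf_k\mathcal A_k(T)$. Every majorant you use rests on $|\varphi_k|\le C\bigl(|\psi_k|+\lambda_k^{-1}\|f_k\|_{L^1(0,T)}\bigr)$ with $C$ independent of $k$, and that is precisely a $k$-uniform lower bound for $\mathcal A_k(T)$. Without it neither M-test closes. The paper's introduction singles this out as the main technical point. The paper proves $\mathcal A_k(T)\ge e^{-T/\mu_0}$ separately, in the proof of Proposition~\ref{prop:stability_phi_L2}: monotonicity of $\mathcal A_k$ gives $\partial_t^\alpha\mathcal A_k\le0$, hence $\mathcal A_k'\ge-\mathcal A_k/\mu$, and Gr\"onwall gives \eqref{eq:Ak_lower_bound}.

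Your own perturbation idea also gives the bound, without monotonicity:
\begin{enumerate}[(1)]
\item Since $0\le H_k(t,s)\le K(t-s):=\mu_0^{-1}\bigl(1+(\lambda_1\Gamma(1-\alpha))^{-1}(t-s)^{-\alpha}\bigr)$, Gr\"onwall with this $k$-independent kernel yields $\sup_{[0,T]}|\mathcal A_k'|\le C$, hence $|\partial_t^\alpha\mathcal A_k|\le C$.
\item Duhamel's formula for $\mathcal A_k'=-\mu^{-1}\mathcal A_k-(\mu\lambda_k)^{-1}\partial_t^\alpha\mathcal A_k$ then gives $\bigl|\mathcal A_k(t)-e^{-\int_0^t\mu(s)^{-1}\,ds}\bigr|\le C\lambda_k^{-1}$.
\item So $\mathcal A_k(T)\ge\tfrac12 e^{-T/\mu_0}$ for all large $k$, and the lemma handles the finitely many remaining $k$.
\end{enumerate}
You must prove this step; the lemma does not supply it.

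Your bound (c) also does not support the M-test for $u_{xxt}$ as written. The Volterra equation for $\mathcal B_k'$ has forcing $f_k/(\mu\lambda_k)$, so weighting $\mathcal B_k'$ by $\lambda_k^0$ discards a factor $\lambda_k^{-1}$. The resulting majorant $\lambda_k\|f_k\|_{L^\infty(\delta,T)}$ is not summable under F3--F4 in general. Three integrations by parts give only $\lambda_k\sup_t|f_k(t)|\le Ck^{-1}\sup_t\|f_{xxx}(\cdot,t)\|_{L^2(0,l)}$, and with the supremum inside the sum, Cauchy--Schwarz over $k$ is unavailable. A counterexample is $f=\sum_k\phi_k(t)k^{-3}e_k(x)$ with bumps $\phi_k$ of height one and disjoint supports in $(\delta,T)$.

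Moreover, through the memory term, $\mathcal B_k'$ on $[\delta,T]$ depends on $f_k$ on $(0,\delta)$, where only $L^2$ is available. So no bound by $\|f_k\|_{L^\infty(\delta,T)}$ alone can hold. Using the comparison kernel $K$ and splitting the resolvent convolution at $s=\delta/2$ gives the correct estimate $\lambda_k\sup_{[\delta,T]}|\mathcal B_k'|\le C_\delta\bigl(\sup_{[\delta/2,T]}|f_k|+\|f_k\|_{L^1(0,T)}\bigr)$. Its right-hand side is summable by \eqref{eq:sup_fk_kminus2_bound} (with $\delta/2$) and \eqref{eq:sum_int_fk_thm}. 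The paper keeps the factor $\lambda_k^{-1}$: its \eqref{eq:lam_yk_sup_delta_bound_thm} bounds $\lambda_k|u_k'|$ in terms of $\sup|f_k|$, not $\lambda_k\sup|f_k|$. However, its \eqref{eq:yk_sup_delta_bound_thm} omits the $L^1(0,\delta)$ contribution in the same way yours does.

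Finally, F3--F4 impose no continuity of $f$ in $t$, while $\mathcal B_k'$ contains $f_k/(\mu\lambda_k)$ directly. So concluding $u\in C^1((0,T);C^2([0,l]))$ and the pointwise validity of \eqref{1.1} requires an extra continuity assumption on $f$. Neither your proposal nor the paper's proof supplies it.
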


\begin{proof}
\textbf{Formal Solution.}
Let $\{e_k\}_{k\ge1}$ be the sine eigenfunctions of $-\partial_{xx}$ on $(0,l)$:
\[
e_k(x)=\sin\Big(\frac{k\pi}{l}x\Big),\qquad -e_k''=\lambda_k e_k,\qquad
\lambda_k=\Big(\frac{k\pi}{l}\Big)^2.
\]
Then $\{e_k\}_{k\ge1}$ is an orthogonal basis of $L^2(0,l)$ with
\[
\int_0^l e_k(x)e_j(x)\,dx=\frac{l}{2}\delta_{kj}.
\]
For $v\in L^2(0,l)$ define its sine and cosine Fourier coefficients
\[
v_k:=\frac{2}{l}\int_0^l v(x)\sin\Big(\frac{k\pi}{l}x\Big)\,dx,\qquad
v_k^c:=\frac{2}{l}\int_0^l v(x)\cos\Big(\frac{k\pi}{l}x\Big)\,dx.
\]
Then Parseval's identity and Bessel's inequalities read
\begin{equation}\label{bessel_cos_frac}
\|v\|_{L^2(0,l)}^2=\frac{l}{2}\sum_{k=1}^\infty |v_k|^2,
\qquad
\sum_{k=1}^\infty |v_k|^2 \le \frac{2}{l}\|v\|_{L^2(0,l)}^2,\quad
\sum_{k=1}^\infty |v_k^c|^2 \le \frac{2}{l}\|v\|_{L^2(0,l)}^2 .
\end{equation}

Assume that $f$ and $\psi$ have sine-series expansions
\[
f(x,t)=\sum_{k=1}^{\infty}f_k(t)e_k(x),\qquad
\psi(x)=\sum_{k=1}^{\infty}\psi_k e_k(x),
\]
where $f_k(t)$ and  $\psi_k$ are the sine coefficients of $f$ and $\psi$ defined by 
\begin{equation}\label{coeff:psi:f}
f_k(t)=\frac{2}{l}\int_0^l f(x,t)\sin\Big(\frac{k\pi}{l}x\Big)\,dx, \quad \psi_k=\frac{2}{l}\int_0^l \psi(x)\sin\Big(\frac{k\pi}{l}x\Big)\,dx,  
\end{equation}
$k=1,2,\dots$, respectively.

We seek solution of \eqref{1.1}-\eqref{finalT} in the sine-series expansion form:
Expanding $u$ in the sine basis,
\[
u(x,t)=\sum_{k=1}^\infty u_k(t)e_k(x),\qquad e_k(x)=\sin\Big(\frac{k\pi}{l}x\Big),
\]
and using $u_{xx}=-\sum_{k\ge1}\lambda_k u_k e_k$ and $u_{xxt}=-\sum_{k\ge1}\lambda_k u_k' e_k$,
we obtain for each $k\ge1$ the mode equation
\begin{equation}\label{mode:eq_frac_thm}
\partial_t^\alpha u_k(t)+\mu(t)\lambda_k u_k'(t)+\lambda_k u_k(t)=f_k(t),
\qquad 0<t<T,\qquad u_k(T)=\psi_k.
\end{equation}

Let $y_k(t):=u_k'(t)$ and $u_{0,k}:=u_k(0)$. Then
\[
u_k(t)=u_{0,k}+\int_0^t y_k(s)\,ds,\qquad
\partial_t^\alpha u_k(t)=\frac{1}{\Gamma(1-\alpha)}\int_0^t (t-s)^{-\alpha}y_k(s)\,ds.
\]
Substituting into \eqref{mode:eq_frac_thm} yields the Volterra equation
\[
y_k(t)+\int_0^t H_k(t,s)y_k(s)\,ds
=\frac{f_k(t)-\lambda_ku_{0,k}}{\mu(t)\lambda_k},
\qquad 0<t<T,
\]
where
\[
H_k(t,s)=\frac{1}{\mu(t)}+\frac{1}{\mu(t)\lambda_k\Gamma(1-\alpha)}(t-s)^{-\alpha},
\qquad 0\le s<t\le T.
\]
Let $R_k$ be the corresponding resolvent kernel (given by the Neumann series). Then
\[
y_k(t)=g_k(t)+\int_0^t R_k(t,s)g_k(s)\,ds,\qquad
g_k(t):=\frac{f_k(t)-\lambda_ku_{0,k}}{\mu(t)\lambda_k}.
\]
Integrating in time and using $u_k(t)=u_{0,k}+\int_0^t y_k(\tau)\,d\tau$ yields
\begin{equation}\label{uk_AB_thm}
u_k(t)=u_{0,k}\,\mathcal A_k(t)+\mathcal B_k(t),\qquad 0\le t\le T,
\end{equation}
where
\begin{align}\label{def:AkBk_frac}
&\mathcal{A}_k(t)
:=1-\int_0^t \frac{1}{\mu(\tau)}\,d\tau
-\int_0^t\int_0^\tau R_k(\tau,s)\,\frac{1}{\mu(s)}\,ds\,d\tau,
\\&\mathcal{B}_k(t)
:=\int_0^t \frac{f_k(\tau)}{\mu(\tau)\lambda_k}\,d\tau
+\int_0^t\int_0^\tau R_k(\tau,s)\,\frac{f_k(s)}{\mu(s)\lambda_k}\,ds\,d\tau.
\end{align}

Evaluating \eqref{uk_AB_thm} at $t=T$ and using $u_k(T)=\psi_k$ gives
\[
\psi_k=u_{0,k}\mathcal A_k(T)+\mathcal B_k(T).
\]
By Lemma~\ref{lem:Ak_positive}, $\mathcal A_k(T)>0$ for all $k$, hence
\[
u_{0,k}=\frac{\psi_k-\mathcal B_k(T)}{\mathcal A_k(T)}.
\]
Substituting this into \eqref{uk_AB_thm} yields
\[
u_k(t)=\frac{\mathcal A_k(t)}{\mathcal A_k(T)}\psi_k
+\mathcal B_k(t)-\frac{\mathcal A_k(t)}{\mathcal A_k(T)}\mathcal B_k(T).
\]
Therefore, inserting into $u(x,t)=\sum_{k=1}^\infty u_k(t)e_k(x)$ and $u_0(x)=\sum_{k=1}^\infty u_{0,k} e_k(x)$
gives \eqref{u_frac_compact_thm}--\eqref{phi_frac_compact_thm}.

\textbf{Convergence and regularity.}
We justify that the series representations define a classical solution in the sense of
Definition~\ref{def:frac}. The argument is based on coefficient decay obtained by integration by parts,
together with Bessel's inequality \eqref{bessel_cos_frac} and the Weierstrass M-test.

\medskip
\noindent\emph{(a) Preparatory coefficient estimates.}
Under \ref{F2}, integrating by parts twice in the definition of $\psi_k$ gives
\begin{equation}\label{eq:psi_twoip_thm}
\psi_k=-\Big(\frac{l}{\pi k}\Big)^2(\psi'')_k,
\qquad
(\psi'')_k:=\frac{2}{l}\int_0^l \psi''(x)\sin\Big(\frac{k\pi}{l}x\Big)\,dx,
\end{equation}
and integrating once more yields
\begin{equation}\label{eq:psi_threeip_thm}
\psi_k=-\Big(\frac{l}{\pi k}\Big)^3(\psi''')_k^{c},
\qquad
(\psi''')_k^{c}:=\frac{2}{l}\int_0^l \psi'''(x)\cos\Big(\frac{k\pi}{l}x\Big)\,dx.
\end{equation}
Therefore, by Cauchy--Schwarz and Bessel's inequality \eqref{bessel_cos_frac},
\begin{align}
\sum_{k=1}^\infty |\psi_k|
&\le \frac{l^2}{\pi^2}\sum_{k=1}^\infty \frac{1}{k^2}|(\psi'')_k|
\le \frac{l^2}{\pi^2}\Big(\sum_{k=1}^\infty\frac{1}{k^4}\Big)^{1/2}
\Big(\sum_{k=1}^\infty|(\psi'')_k|^2\Big)^{1/2}
\le C\|\psi''\|_{L^2(0,l)}, \label{eq:sum_psik_abs_thm}\\[2mm]
\sum_{k=1}^\infty \lambda_k |\psi_k|
&= \Big(\frac{\pi}{l}\Big)^2 \sum_{k=1}^\infty k^2|\psi_k|
\le C \sum_{k=1}^\infty \frac{1}{k}|(\psi''')_k^{c}|
\le C\Big(\sum_{k=1}^\infty\frac{1}{k^2}\Big)^{1/2}
\Big(\sum_{k=1}^\infty|(\psi''')_k^{c}|^2\Big)^{1/2}
\le C\|\psi'''\|_{L^2(0,l)}. \label{eq:sum_lam_psik_thm}
\end{align}

Similarly, under \ref{F3}, integrating by parts twice in the definition of $f_k(t)$ gives, for a.e. $t\in(0,T)$,
\begin{equation}\label{eq:f_twoip_thm}
f_k(t)=-\Big(\frac{l}{\pi k}\Big)^2\,(f_{xx}(\cdot,t))_k,
\qquad
(f_{xx}(\cdot,t))_k:=\frac{2}{l}\int_0^l f_{xx}(x,t)\sin\Big(\frac{k\pi}{l}x\Big)\,dx.
\end{equation}
Hence, for a.e. $t$,
\begin{equation}\label{eq:sum_fk_pointwise_thm}
\sum_{k=1}^\infty |f_k(t)|
\le C\sum_{k=1}^\infty \frac{1}{k^2}|(f_{xx}(\cdot,t))_k|
\le C\|f_{xx}(\cdot,t)\|_{L^2(0,l)},
\end{equation}
and by Fubini and Cauchy--Schwarz in time,
\begin{align}
\sum_{k=1}^\infty \int_0^T |f_k(t)|\,dt
&\le C \int_0^T \|f_{xx}(\cdot,t)\|_{L^2(0,l)}\,dt
\le C\sqrt{T}\,\|f_{xx}\|_{L^2(0,T;L^2(0,l))} <\infty. \label{eq:sum_int_fk_thm}
\end{align}
Moreover, if \ref{F4} holds, then for every $\delta\in(0,T)$ we have
\begin{equation}\label{eq:sup_sum_fk_delta_thm}
\sup_{t\in[\delta,T]}\sum_{k=1}^\infty |f_k(t)|
\le C \sup_{t\in[\delta,T]}\|f_{xx}(\cdot,t)\|_{L^2(0,l)} <\infty.
\end{equation}

\medskip
\noindent\emph{(b) Uniform convergence of the series for $u$.}
From the explicit mode representation
\[
u_k(t)=\frac{\mathcal A_k(t)}{\mathcal A_k(T)}\psi_k
+\mathcal B_k(t)-\frac{\mathcal A_k(t)}{\mathcal A_k(T)}\mathcal B_k(T),
\]
and Lemma~\ref{lem:Ak_positive} (which yields $\mathcal A_k(T)>0$), we obtain a $k$-uniform bound as follows.
The Volterra resolvent construction and \ref{F1} imply that there exists $C>0$, independent of $k$, such that
\begin{equation}\label{eq:AkBk_uniform_bounds_thm}
\sup_{t\in[0,T]}|\mathcal A_k(t)|\le C,
\qquad
\sup_{t\in[0,T]}|\mathcal B_k(t)|
\le \frac{C}{\lambda_k}\int_0^T |f_k(s)|\,ds .
\end{equation}
Consequently,
\begin{equation}\label{eq:uk_sup_bound_thm}
\sup_{t\in[0,T]}|u_k(t)|
\le C\Big(|\psi_k|+\frac{1}{\lambda_k}\int_0^T |f_k(s)|\,ds\Big)
=:C M_k.
\end{equation}
Since $|e_k(x)|\le 1$ on $[0,l]$, it follows that
\[
|u_k(t)e_k(x)|\le C M_k,\qquad (x,t)\in[0,l]\times[0,T].
\]
Using \eqref{eq:sum_psik_abs_thm}, \eqref{eq:sum_int_fk_thm}, and $\sum_{k\ge1}\lambda_k^{-1}<\infty$, we get
\[
\sum_{k=1}^\infty M_k
\le \sum_{k=1}^\infty |\psi_k|
+\sum_{k=1}^\infty \frac{1}{\lambda_k}\int_0^T|f_k(s)|\,ds
<\infty.
\]
Therefore, by the Weierstrass M-test, the series \eqref{u_frac_compact_thm} converges uniformly on
$[0,l]\times[0,T]$. In particular, $u\in C([0,T];C([0,l]))$.

\medskip
\noindent\emph{(c) Uniform convergence of the series for $u_{xx}$.}
Differentiating formally in $x$ gives
\[
u_{xx}(x,t)=-\sum_{k=1}^\infty \lambda_k u_k(t)e_k(x).
\]
We justify uniform convergence by estimating the $k$-th term:
from \eqref{eq:uk_sup_bound_thm},
\begin{equation}\label{eq:lam_uk_sup_bound_thm}
\sup_{t\in[0,T]}\lambda_k|u_k(t)|
\le C\Big(\lambda_k|\psi_k|+\int_0^T|f_k(s)|\,ds\Big).
\end{equation}
The right-hand side is summable in $k$ by \eqref{eq:sum_lam_psik_thm} and \eqref{eq:sum_int_fk_thm}.
Hence
\[
\sum_{k=1}^\infty \sup_{t\in[0,T]}\lambda_k|u_k(t)|<\infty,
\]
and since $|e_k(x)|\le 1$, the Weierstrass M-test yields uniform convergence of the series for $u_{xx}$
on $[0,l]\times[0,T]$. Thus $u_{xx}\in C((0,T)\times[0,l])$.
\noindent

Moreover, by Cauchy--Schwarz and \eqref{eq:f_twoip_thm}, for a.e. $t\in(0,T)$,
\begin{equation}\label{eq:fk_sup_kminus2_bound}
|f_k(t)|
\le C\,\frac{1}{k^2}\,\|f_{xx}(\cdot,t)\|_{L^2(0,l)}.
\end{equation}
Indeed, by Cauchy--Schwarz and $|(f_{xx}(\cdot,t))_k|\le C\|f_{xx}(\cdot,t)\|_{L^2(0,l)}$ (Bessel),
the claim follows from \eqref{eq:f_twoip_thm}.
Consequently, if \ref{F4} holds then for every $\delta\in(0,T)$,
\begin{equation}\label{eq:sup_fk_kminus2_bound}
\sup_{t\in[\delta,T]}|f_k(t)|
\le C\,\frac{1}{k^2}\,\sup_{t\in[\delta,T]}\|f_{xx}(\cdot,t)\|_{L^2(0,l)}.
\end{equation}
In particular, $\sum_{k\ge1}\sup_{t\in[\delta,T]}|f_k(t)|<\infty$.

\medskip
\noindent\emph{(d) Uniform convergence of the series for $u_{xxt}$ on $[0,l]\times[\delta,T]$.}
Assume \ref{F4} and fix $\delta\in(0,T)$. Let $y_k(t)=u_k'(t)$. From the resolvent identity,
\[
y_k(t)=g_k(t)+\int_0^t R_k(t,s)g_k(s)\,ds,\qquad
g_k(t)=\frac{f_k(t)-\lambda_ku_{0,k}}{\mu(t)\lambda_k}.
\]
Standard resolvent (solution-operator) estimates for second-kind Volterra equations
with weakly singular kernels imply that for every $\delta\in(0,T)$,
\begin{equation}\label{eq:yk_sup_delta_bound_thm}
\sup_{t\in[\delta,T]}|y_k(t)|
\le C_\delta \,\|g_k\|_{L^\infty(\delta,,T)},
\end{equation}
where $C_\delta$ depends only on $(\delta,T,\alpha)$ and bounds on $\mu$, but not on $k$;
see, e.g., Becker~\cite{Becker11} (resolvent theory for weakly singular kernels).
The $k$-independence follows since $\mu(t)\ge\mu_0$ and $1/\lambda_k\le 1/\lambda_1$, so the kernel bounds entering the resolvent estimate are uniform in $k$.

Multiplying by $\lambda_k$ and using $\mu(t)\ge\mu_0$ gives
\begin{equation}\label{eq:lam_yk_sup_delta_bound_thm}
\sup_{t\in[\delta,T]}\lambda_k|u_k'(t)|
\le C_\delta\Big(\sup_{t\in[\delta,T]}|f_k(t)|+\lambda_k|u_{0,k}|\Big).
\end{equation}
Using $u_{0,k}=(\psi_k-\mathcal B_k(T))/\mathcal A_k(T)$ together with
\eqref{eq:AkBk_uniform_bounds_thm} yields
\begin{equation}\label{eq:lam_varphi_bound_thm}
\lambda_k|u_{0,k}|
\le C\Big(\lambda_k|\psi_k|+\int_0^T|f_k(s)|\,ds\Big).
\end{equation}
Combining \eqref{eq:lam_yk_sup_delta_bound_thm}--\eqref{eq:lam_varphi_bound_thm} we arrive at
\begin{equation}\label{eq:lam_ukprime_majorant_thm}
\sup_{t\in[\delta,T]}\lambda_k|u_k'(t)|
\le C_\delta\Big(\sup_{t\in[\delta,T]}|f_k(t)|+\lambda_k|\psi_k|
+\int_0^T|f_k(s)|\,ds\Big).
\end{equation}

We now show that the right-hand side is summable in $k$.
By \eqref{eq:sup_fk_kminus2_bound} we have
$\sum_{k\ge1}\sup_{t\in[\delta,T]}|f_k(t)|<\infty$.
Moreover, $\sum_{k\ge1}\lambda_k|\psi_k|<\infty$ by \eqref{eq:sum_lam_psik_thm}
and $\sum_{k\ge1}\int_0^T|f_k(s)|\,ds<\infty$ by \eqref{eq:sum_int_fk_thm}.
Therefore,
\[
\sum_{k=1}^\infty \sup_{t\in[\delta,T]}\lambda_k|u_k'(t)|<\infty.
\]
Since $|e_k(x)|\le1$, the Weierstrass M-test implies that the series
\[
u_{xxt}(x,t)=-\sum_{k=1}^\infty \lambda_k u_k'(t)e_k(x)
\]
converges uniformly on $[0,l]\times[\delta,T]$.
Hence $u_{xxt}\in C([0,l]\times[\delta,T])$.

\medskip
\noindent\emph{(e) Uniform convergence of the series for $u_t$ on $[0,l]\times[\delta,T]$.}
Assume \ref{F4} and fix $\delta\in(0,T)$. From \eqref{eq:lam_ukprime_majorant_thm},
\[
\sup_{t\in[\delta,T]}|u_k'(t)|
\le \frac{1}{\lambda_k}\sup_{t\in[\delta,T]}\lambda_k|u_k'(t)|
\le \frac{C_\delta}{\lambda_k}
\Big(\sup_{t\in[\delta,T]}|f_k(t)|+\lambda_k|\psi_k|+\int_0^T|f_k(s)|\,ds\Big),
\]
hence
\begin{equation}\label{eq:ukprime_sup_delta_bound}
\sup_{t\in[\delta,T]}|u_k'(t)|
\le C_\delta\Big(
|\psi_k|+\frac{1}{\lambda_k}\int_0^T|f_k(s)|\,ds
+\frac{1}{\lambda_k}\sup_{t\in[\delta,T]}|f_k(t)|
\Big).
\end{equation}

We show that the right-hand side is summable in $k$.
First, $\sum_{k\ge1}|\psi_k|<\infty$ by \eqref{eq:sum_psik_abs_thm}, and
\[
\sum_{k\ge1}\frac{1}{\lambda_k}\int_0^T|f_k(s)|\,ds<\infty
\]
by \eqref{eq:sum_int_fk_thm} together with $\sum_{k\ge1}\lambda_k^{-1}<\infty$.

Finally, using \eqref{eq:sup_fk_kminus2_bound} and $\lambda_k=(\pi k/l)^2$, we obtain
\[
\frac{1}{\lambda_k}\sup_{t\in[\delta,T]}|f_k(t)|
\le C\,\frac{1}{k^2}\cdot \frac{1}{k^2}\,
\sup_{t\in[\delta,T]}\|f_{xx}(\cdot,t)\|_{L^2(0,l)}
= C\,\frac{1}{k^4}\,\sup_{t\in[\delta,T]}\|f_{xx}(\cdot,t)\|_{L^2(0,l)}.
\]
Hence $\sum_{k\ge1}\lambda_k^{-1}\sup_{t\in[\delta,T]}|f_k(t)|<\infty$ since $\sum k^{-4}<\infty$.

Therefore,
\[
\sum_{k=1}^\infty \sup_{t\in[\delta,T]}|u_k'(t)|<\infty.
\]
Since $|e_k(x)|\le1$, the Weierstrass M-test implies that the series
\[
u_t(x,t)=\sum_{k=1}^\infty u_k'(t)e_k(x)
\]
converges uniformly on $[0,l]\times[\delta,T]$. In particular, $u_t\in C([0,l]\times[\delta,T])$.
Moreover, by part (d) the series for $u_{xxt}=-\sum_{k\ge1}\lambda_k u_k'(t)e_k(x)$ converges uniformly
on $[0,l]\times[\delta,T]$, so $u_t(\cdot,t)\in C^2([0,l])$ and $(u_t)_{xx}=u_{xxt}$ on $[\delta,T]$.

Since each $u_k(\cdot)e_k(\cdot)$ is $C^1$ in $t$ on $[\delta,T]$ and
$\sum_{k\ge1}u_k(t)e_k(x)$ converges uniformly on $[0,l]\times[\delta,T]$ (by (b)),
while $\sum_{k\ge1}u_k'(t)e_k(x)$ converges uniformly on $[0,l]\times[\delta,T]$ (by this step),
the standard theorem on termwise differentiation of uniformly convergent series on compact sets
implies that $u\in C^1([0,l]\times[\delta,T])$ and
\[
u_t(x,t)=\sum_{k=1}^\infty u_k'(t)e_k(x)\quad\text{on }[0,l]\times[\delta,T].
\]
Moreover, by (d) we have $(u_t)_{xx}=u_{xxt}$ on $[0,l]\times[\delta,T]$, hence
$u\in C^1([\delta,T];C^2([0,l]))$ for every $\delta\in(0,T)$, i.e.
$u\in C^1((0,T);C^2([0,l]))$.

\medskip
\noindent\emph{(f) Uniform convergence of the series for $\partial_t^\alpha u$ on $[0,l]\times[\delta,T]$.}
Fix $\delta\in(0,T)$ and assume \ref{F4}. By the mode equation \eqref{mode:eq_frac_thm}, for $t\in(0,T)$,
\[
\partial_t^\alpha u_k(t)=f_k(t)+\lambda_k u_k(t)+\mu(t)\lambda_k u_k'(t).
\]
Hence, for $t\in[\delta,T]$,
\begin{equation}\label{eq:caputo_series_thm}
\partial_t^\alpha u(x,t)
=\sum_{k=1}^\infty \partial_t^\alpha u_k(t)e_k(x)
=\sum_{k=1}^\infty \Big(f_k(t)+\lambda_k u_k(t)+\mu(t)\lambda_k u_k'(t)\Big)e_k(x).
\end{equation}

We show uniform convergence of the right-hand side on $[0,l]\times[\delta,T]$ termwise.
For the first series, by \eqref{eq:sup_fk_kminus2_bound} we have
$\sum_{k\ge1}\sup_{t\in[\delta,T]}|f_k(t)|<\infty$, and since $|e_k(x)|\le1$, the M-test yields
uniform convergence of $\sum_{k\ge1} f_k(t)e_k(x)$ on $[0,l]\times[\delta,T]$.

The second series $\sum_{k\ge1}\lambda_k u_k(t)e_k(x)$ equals $-u_{xx}(x,t)$ and is uniformly convergent on
$[0,l]\times[0,T]$ by part (c).
The third series $\sum_{k\ge1}\mu(t)\lambda_k u_k'(t)e_k(x)$ is uniformly convergent on
$[0,l]\times[\delta,T]$ by part (d) and boundedness of $\mu$ on $[\delta,T]$.
Therefore, \eqref{eq:caputo_series_thm} converges uniformly on $[0,l]\times[\delta,T]$, and in particular
$\partial_t^\alpha u\in C([0,l]\times[\delta,T])$.
Since $\delta\in(0,T)$ is arbitrary, we conclude $\partial_t^\alpha u\in C((0,T)\times[0,l])$.

\medskip
\noindent
Collecting (b)--(f), we obtain the stated regularity and the pointwise validity of \eqref{1.1}.

\textbf{Uniqueness.}
Let $(u_1,u_{0,1})$ and $(u_2,u_{0,2})$ be two classical solutions for the same $(f,\psi)$.
Set $u:=u_1-u_2$ and $\chi:=u_{0,1}-u_{0,2}$.
Then $u$ satisfies the homogeneous fractional pseudo-parabolic problem
\begin{equation}\label{eq:w_hom}
\partial_t^\alpha u - u_{xx}-\mu(t)\,u_{xxt}=0,\quad (x,t)\in(0,l)\times(0,T),
\end{equation}
with boundary conditions $u(0,t)=u(l,t)=0$, final-time measurement $u(x,T)=0$,
and initial state $u(x,0)=\chi(x)$.

Expand $u$ in the sine basis:
\[
u(x,t)=\sum_{k=1}^\infty u_k(t)e_k(x),\qquad
e_k(x)=\sin\Big(\frac{k\pi}{l}x\Big),\quad \lambda_k=\Big(\frac{k\pi}{l}\Big)^2.
\]
Then each coefficient $u_k$ solves (with $f_k\equiv0$)
\begin{equation}\label{eq:wk_hom}
\partial_t^\alpha u_k(t)+\mu(t)\lambda_k u_k'(t)+\lambda_k u_k(t)=0,
\qquad 0<t<T,\qquad u_k(T)=0,
\end{equation}
and $u_k(0)=\chi_k$ (the sine coefficient of $\chi$).

Repeating the derivation of the mode representation (Volterra reduction + resolvent),
but with $f_k\equiv0$, yields
\begin{equation}\label{eq:wk_Ak}
u_k(t)=\chi_k\,\mathcal A_k(t),\qquad 0\le t\le T,
\end{equation}
where $\mathcal A_k(t)$ is exactly the same functional defined in \eqref{def:AkBk_frac}.
Evaluating \eqref{eq:wk_Ak} at $t=T$ and using $u_k(T)=0$, we get
\[
0=u_k(T)=\chi_k\,\mathcal A_k(T).
\]
By  lemma \ref{lem:Ak_positive} we have $\mathcal A_k(T)\neq0$ for every $k$, hence $\chi_k=0$ for all $k$.
Therefore $u_k(t)\equiv0$ for all $k$ and all $t\in[0,T]$, so $u\equiv0$ on $(0,l)\times(0,T)$.
In particular, $\chi=u(\cdot,0)\equiv 0$, i.e. $u_{0,1}\equiv u_{0,2}$.

\end{proof}

\begin{lemma}[Positivity of $\mathcal A_k$]\label{lem:Ak_positive}
Let $\mathcal A_k$ be the function defined by \eqref{def:AkBk_frac}. Equivalently, $\mathcal A_k$ is the solution of the homogeneous mode problem
\begin{equation}\label{eq:Ak_mode}
\partial_t^\alpha \mathcal A_k(t)+\mu(t)\lambda_k\,\mathcal A_k'(t)+\lambda_k\,\mathcal A_k(t)=0,
\qquad t\in(0,T],\qquad \mathcal A_k(0)=1,
\end{equation}
where $\partial_t^\alpha$ is the Caputo derivative.

Then:
\begin{enumerate}[(i)]
\item $\mathcal A_k\in AC([0,T])$ and for every $\delta\in(0,T)$ one has
      $\mathcal A_k\in C^1([\delta,T])$ and $\partial_t^\alpha \mathcal A_k\in C([\delta,T])$.
      In particular, \eqref{eq:Ak_mode} holds pointwise on $[\delta,T]$ for every $\delta>0$.
\item $\mathcal A_k(t)>0$ for all $t\in[0,T]$. Hence $\mathcal A_k(T)>0$ for every $k\ge1$.
\end{enumerate}
\end{lemma}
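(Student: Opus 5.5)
The proof has two parts: a regularity step, which is routine, and a positivity step, which I will get from a maximum-principle argument built on the sign of the Caputo derivative at a first zero.

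\emph{Part (i).} Set $y_k=\mathcal A_k'$. By the construction in the proof of Theorem~\ref{thm:wellposed_frac} with $f_k\equiv0$ and $u_{0,k}=1$, $y_k$ solves the second-kind Volterra equation
\[
y_k(t)+\int_0^t H_k(t,s)y_k(s)\,ds=-\frac{1}{\mu(t)}.
\]
The kernel $H_k$ is weakly singular of order $(t-s)^{-\alpha}$, and its coefficients are continuous because $\mu\in C([0,T])$ and $\mu\ge\mu_0$. The Neumann series for the resolvent $R_k$ therefore converges, and $y_k\in C([0,T])$. Hence $\mathcal A_k=1+\int_0^t y_k$ is $C^1$, and in particular absolutely continuous.

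Next, $\partial_t^\alpha\mathcal A_k=I^{1-\alpha}y_k$ is continuous, since a fractional integral of a continuous function is continuous. Reversing the substitution that produced the Volterra equation shows that \eqref{eq:Ak_mode} holds pointwise. This gives (i), and actually on all of $(0,T]$ rather than only on $[\delta,T]$.

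\emph{Part (ii).} I will argue by contradiction. Since $\mathcal A_k(0)=1>0$ and $\mathcal A_k$ is continuous, if $\mathcal A_k$ ever fails to be positive there is a first point $t_*\in(0,T]$ with $\mathcal A_k(t_*)=0$ and $\mathcal A_k>0$ on $[0,t_*)$. At $t_*$:
\begin{itemize}
\item $\mathcal A_k'(t_*)\le0$, because $\mathcal A_k$ decreases to $0$ from positive values.
\item The equation gives $\partial_t^\alpha\mathcal A_k(t_*)=-\mu(t_*)\lambda_k\mathcal A_k'(t_*)\ge0$.
\end{itemize}

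On the other hand, for $g=\mathcal A_k\in C^1$, integration by parts gives
\[
\partial_t^\alpha g(t_*)=\frac{1}{\Gamma(1-\alpha)}\Big[\frac{g(t_*)-g(0)}{t_*^{\alpha}}+\alpha\int_0^{t_*}\frac{g(t_*)-g(s)}{(t_*-s)^{1+\alpha}}\,ds\Big].
\]
This is the standard Luchko-type extremum identity. The boundary term at $s=t_*$ vanishes because $g\in C^1$. With $g(t_*)=0$, $g(0)=1$ and $g(s)>0$ on $[0,t_*)$, the bracket is at most $-t_*^{-\alpha}<0$. So $\partial_t^\alpha\mathcal A_k(t_*)<0$, which contradicts the inequality from the equation. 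Hence $\mathcal A_k>0$ on $[0,T]$.

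\emph{Main obstacle.} The delicate point is justifying the integral identity, which requires that $\mathcal A_k'$ be continuous up to $t_*$. Part (i) supplies this, since $y_k\in C([0,T])$ and $\mu$ is continuous with $\mu\ge\mu_0$.

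A secondary concern is uniformity in $k$: continuity of $y_k$ for each fixed $k$ is all the argument uses. If only the regularity $\mathcal A_k\in C^1([\delta,T])$ were available, I would apply the identity with the same first-zero point $t_*>0$ and use absolute continuity on $[0,\delta]$. The convergence of the integral is still fine, because $g(t_*)-g(s)=O(t_*-s)$ near $t_*$.
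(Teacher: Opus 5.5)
Your proposal is correct and follows essentially the same route as the paper: you reduce to the weakly singular Volterra equation for $y_k=\mathcal A_k'$, then derive a contradiction at the first zero $t_*$ by comparing the sign of $\partial_t^\alpha\mathcal A_k(t_*)$ forced by the mode equation with the Luchko-type extremum identity. Your observation that the continuous forcing $-1/\mu$ gives $y_k\in C([0,T])$ (the paper only claims $C([\delta,T])$) is correct, and it makes the vanishing of the boundary term at $s=t_*$ in that identity cleaner to justify than the paper's appeal to absolute continuity alone.
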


\begin{proof}
Set $y_k(t):=\mathcal A_k'(t)$ and note that for $t\in[0,T]$,
\[
\mathcal A_k(t)=1+\int_0^t y_k(s)\,ds,
\qquad
\partial_t^\alpha \mathcal A_k(t)=\frac{1}{\Gamma(1-\alpha)}\int_0^t (t-s)^{-\alpha}y_k(s)\,ds.
\]
Substituting into \eqref{eq:Ak_mode} and dividing by $\mu(t)\lambda_k$ gives, for a.e. $t\in(0,T)$,
the Volterra equation of the second kind
\begin{equation}\label{eq:volterra_y_Ak}
y_k(t)+\int_0^t H_k(t,s)\,y_k(s)\,ds \;=\; -\frac{1}{\mu(t)},
\end{equation}
with a weakly singular kernel
\[
H_k(t,s)=\frac{1}{\mu(t)}+\frac{1}{\mu(t)\lambda_k\Gamma(1-\alpha)}(t-s)^{-\alpha},
\qquad 0\le s<t\le T.
\]
Since $\mu$ is continuous and bounded away from $0$ and $(t-s)^{-\alpha}$ is weakly singular with
$\alpha\in(0,1)$, the equation \eqref{eq:volterra_y_Ak} falls within the class of weakly singular
linear Volterra equations for which existence/uniqueness and resolvent-kernel representations are
available; see Becker~\cite{Becker11}. Consequently, $y_k$ admits a (locally) continuous
representative on $(0,T]$, and in particular $y_k\in C([\delta,T])$ for every $\delta\in(0,T)$.
Therefore $\mathcal A_k\in AC([0,T])$ and $\mathcal A_k\in C^1([\delta,T])$ for every $\delta>0$.
In particular, choosing any $\delta\in(0,T)$, all terms in \eqref{eq:Ak_mode} are continuous on
$[\delta,T]$ and thus \eqref{eq:Ak_mode} holds pointwise there.

Let $v\in AC([0,T])$ and let $t_0\in(0,T]$ be such that $v(t_0)=\min_{0\le s\le t_0} v(s)$.
For $AC$ functions, the Caputo derivative admits the identity
\[
\partial_t^\alpha v(t_0)
=\frac{1}{\Gamma(1-\alpha)}
\Bigg(
\frac{v(t_0)-v(0)}{t_0^\alpha}
+\alpha\int_0^{t_0}\frac{v(t_0)-v(s)}{(t_0-s)^{\alpha+1}}\,ds
\Bigg).
\]
Since $v(t_0)\le v(s)$ on $[0,t_0]$, the integral term is $\le0$, hence
\begin{equation}\label{eq:caputo_min_ineq_merged}
\partial_t^\alpha v(t_0)\le \frac{v(t_0)-v(0)}{\Gamma(1-\alpha)\,t_0^\alpha}.
\end{equation}

Assume, for contradiction, that $\mathcal A_k$ vanishes somewhere on $(0,T]$ and define the first
hitting time
\[
t_*:=\inf\{t\in(0,T]:\ \mathcal A_k(t)=0\}.
\]
Then $\mathcal A_k(t)>0$ for $t\in[0,t_*)$ and $\mathcal A_k(t_*)=0$.
Thus $t_*$ is a minimizer of $\mathcal A_k$ on $[0,t_*]$ with minimum $0$.
Applying \eqref{eq:caputo_min_ineq_merged} to $v=\mathcal A_k$ at $t_*$ yields
\[
\partial_t^\alpha \mathcal A_k(t_*)
\le \frac{\mathcal A_k(t_*)-\mathcal A_k(0)}{\Gamma(1-\alpha)\,t_*^\alpha}
=\frac{0-1}{\Gamma(1-\alpha)\,t_*^\alpha}<0.
\]
On the other hand, since $t_*>0$, choose $\delta\in(0,t_*)$. By Step~1, $\mathcal A_k\in C^1([\delta,T])$,
so $\mathcal A_k'(t_*)$ exists and \eqref{eq:Ak_mode} holds pointwise at $t=t_*$.
Using $\mathcal A_k(t_*)=0$ in \eqref{eq:Ak_mode} gives
\begin{equation}\label{eq:caputo_ode_at_tstar_merged}
\partial_t^\alpha \mathcal A_k(t_*)=-\mu(t_*)\lambda_k\,\mathcal A_k'(t_*).
\end{equation}
Moreover, $\mathcal A_k(t)>0$ for $t<t_*$ and $\mathcal A_k(t_*)=0$ implies $\mathcal A_k'(t_*)\le0$
(otherwise, if $\mathcal A_k'(t_*)>0$, then $\mathcal A_k(t)<0$ for $t<t_*$ sufficiently close to $t_*$,
contradicting the definition of $t_*$). Since $\mu(t_*)\ge\mu_0>0$ and $\lambda_k>0$, the right-hand side
of \eqref{eq:caputo_ode_at_tstar_merged} is $\ge0$, hence $\partial_t^\alpha \mathcal A_k(t_*)\ge0$.
This contradicts $\partial_t^\alpha \mathcal A_k(t_*)<0$ above.

Therefore $\mathcal A_k$ cannot hit zero on $(0,T]$. Since $\mathcal A_k(0)=1$, we conclude
$\mathcal A_k(t)>0$ for all $t\in[0,T]$, in particular $\mathcal A_k(T)>0$.
\end{proof}

\subsection{Continuous dependence on the data}\label{subsec:stab} In this subsection, we prove that, under assumptions \ref{F1}--\ref{F3}, the reconstruction of the initial state $u_0(x)=u(x,0)$ from the final-time measurement $\psi(x)=u(x,T)$ continuously depends on the data.

\begin{proposition}[Stability estimate for the reconstructed initial state]\label{prop:stability_phi_L2}
Assume \ref{F1}--\ref{F3}. Let $(u,u_0)$ be the solution of the  problem \eqref{1.1}--\eqref{finalT} given by \eqref{u_frac_compact_thm}--\eqref{phi_frac_compact_thm}.
Then there exists a constant $C>0$ (depending only on $\alpha,T,\mu_0,l$ and bounds on $\mu$) such that
\begin{equation}\label{eq:stab_phi_L2_generic}
\|u_0\|_{L^2(0,l)}
\le
C\Bigl(
\|\psi\|_{L^2(0,l)}+\sqrt{T}\,\|f\|_{L^2\!\left(0,T;L^2(0,l)\right)}
\Bigr).
\end{equation}
Moreover, for two data sets $(\psi_1,f_1)$ and $(\psi_2,f_2)$ with corresponding reconstructions
$u_{0,1},u_{0,2}$, one has
\begin{equation}\label{eq:stab_phi_diff_L2_generic}
\|u_{0,1}-u_{0,2}\|_{L^2(0,l)}
\le
C\Bigl(
\|\psi_1-\psi_2\|_{L^2(0,l)}+\sqrt{T}\,\|f_1-f_2\|_{L^2\!\left(0,T;L^2(0,l)\right)}
\Bigr).
\end{equation}
\end{proposition}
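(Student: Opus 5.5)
The plan is to work mode by mode from the explicit formula \eqref{phi_frac_compact_thm}, $u_{0,k}=(\psi_k-\mathcal B_k(T))/\mathcal A_k(T)$, and then sum with Parseval's identity \eqref{bessel_cos_frac}. This gives
\[
\|u_0\|_{L^2(0,l)}^2=\frac{l}{2}\sum_{k\ge1}\frac{|\psi_k-\mathcal B_k(T)|^2}{\mathcal A_k(T)^2}
\le l\sum_{k\ge1}\frac{|\psi_k|^2+|\mathcal B_k(T)|^2}{\mathcal A_k(T)^2}.
\]
Two $k$-uniform ingredients are then needed:
\begin{enumerate}[(a)]
\item a lower bound $\mathcal A_k(T)\ge a_*>0$ with $a_*$ independent of $k$;
\item the bound $|\mathcal B_k(T)|\le \frac{C}{\lambda_k}\int_0^T|f_k(s)|\,ds$ from \eqref{eq:AkBk_uniform_bounds_thm}.
\end{enumerate}
With these, Cauchy--Schwarz in time gives $|\mathcal B_k(T)|^2\le C\lambda_1^{-2}T\int_0^T|f_k|^2ds$. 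Summing over $k$ and using Parseval for $f(\cdot,t)$ together with Fubini yields $\sum_k|\mathcal B_k(T)|^2\le CT\|f\|^2_{L^2(0,T;L^2)}$. This is \eqref{eq:stab_phi_L2_generic}. Since the map $(\psi,f)\mapsto u_0$ is linear (the $\mathcal A_k$ do not depend on the data), \eqref{eq:stab_phi_diff_L2_generic} follows by applying the same estimate to $(\psi_1-\psi_2,f_1-f_2)$.

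The main obstacle is (a). Lemma~\ref{lem:Ak_positive} gives $\mathcal A_k(T)>0$ for each $k$, but not uniformly in $k$. I would first identify the limit as $k\to\infty$. Dividing \eqref{eq:Ak_mode} by $\lambda_k$ gives $\mu(t)\mathcal A_k'+\mathcal A_k=-\lambda_k^{-1}\partial_t^\alpha\mathcal A_k$, whose formal limit is the ODE $\mu\mathcal A'+\mathcal A=0$. Its solution is $\mathcal A_\infty(t)=\exp(-\int_0^t\mu^{-1})\ge e^{-T/\mu_0}$.

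To make this rigorous, I would compare the Volterra equations \eqref{eq:volterra_y_Ak} for $y_k$ and for $y_\infty=\mathcal A_\infty'$. The kernels differ by $\frac{(t-s)^{-\alpha}}{\mu(t)\lambda_k\Gamma(1-\alpha)}$, which is $O(\lambda_k^{-1})$ in $L^1$ uniformly in $t$. A Gronwall-type estimate for weakly singular kernels (Becker~\cite{Becker11}), with $k$-uniform resolvent bounds as already used in \eqref{eq:yk_sup_delta_bound_thm}, then gives $\sup_t|\mathcal A_k(t)-\mathcal A_\infty(t)|\le C/\lambda_k$. Hence $\mathcal A_k(T)\ge e^{-T/\mu_0}/2$ for all $k\ge k_0$. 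The finitely many modes $k<k_0$ are handled by Lemma~\ref{lem:Ak_positive}, so one can take $a_*=\min\{e^{-T/\mu_0}/2,\min_{k<k_0}\mathcal A_k(T)\}$.

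If this comparison proves delicate, the fallback is a direct maximum-principle lower bound. The idea is to show that $w(t)=\mathcal A_k(t)-c\,e^{-\beta t}$ cannot attain a first negative minimum. At such a point one would use \eqref{eq:caputo_min_ineq_merged} together with a bound $\partial_t^\alpha e^{-\beta t}\le 0$ and $\mathcal A_k$-independent constants, choosing $\beta$ large in terms of $\mu_0,\lambda_1,\alpha,T$. This would give a $k$-independent lower bound directly. In either route the constant $C$ depends only on $\alpha,T,\mu_0,l$ and bounds on $\mu$, as claimed.
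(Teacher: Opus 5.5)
Your argument is correct. It has the same skeleton as the paper's: the modewise formula, Parseval, the bound \eqref{eq:BkT_bound} on $\mathcal B_k(T)$, Cauchy--Schwarz in time, and linearity. The genuine difference is how you obtain the $k$-uniform lower bound on $\mathcal A_k(T)$.

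\textbf{The paper's route.} It asserts $\mathcal A_k'=y_k\le 0$ because \eqref{eq:volterra_y_Ak} has a negative right-hand side and a positive kernel. It then deduces $\partial_t^\alpha\mathcal A_k\le 0$ and integrates $\mathcal A_k'\ge-\mathcal A_k/\mu$ to get \eqref{eq:Ak_lower_bound}. This is short, but sign preservation is not a general property of second-kind Volterra equations, since the Neumann series for $(I+H_k)^{-1}$ alternates in sign. For example, $y+\int_0^t y\,ds=t-1$ has positive kernel and negative data on $[0,1)$, yet its solution $1-2e^{-t}$ is positive for $t>\ln 2$. So that step needs its own proof.

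\textbf{Your route (a).} Both of your routes bypass monotonicity. Route (a) works, provided you use the global bound $\|y_k\|_{L^\infty(0,T)}\le C$ from Henry's weakly singular Gronwall inequality, rather than the $[\delta,T]$ estimate \eqref{eq:yk_sup_delta_bound_thm}. This global bound holds because the kernel is dominated by $\mu_0^{-1}\bigl(1+(\lambda_1\Gamma(1-\alpha))^{-1}(t-s)^{-\alpha}\bigr)$. However, for the modes $k<k_0$ route (a) only uses the qualitative positivity of Lemma~\ref{lem:Ak_positive}. The resulting constant therefore depends on $\mu$ itself, not merely on bounds for $\mu$, contrary to your last sentence.

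\textbf{Your fallback is the better argument.} It deserves to be the primary one. Fix $0<c<1$ and take the barrier $\phi(t)=\exp\bigl(-\int_0^t\mu(s)^{-1}\,ds\bigr)$, which is your $\mathcal A_\infty$. Since $\mu\phi'+\phi=0$, the function $w=\mathcal A_k-c\phi$ satisfies $\partial_t^\alpha w+\mu\lambda_k w'+\lambda_k w=-c\,\partial_t^\alpha\phi>0$. At the first zero $t_*$ of $w$ (note $w(0)=1-c>0$), the inequality \eqref{eq:caputo_min_ineq_merged} together with $w'(t_*)\le 0$ makes the left-hand side negative, a contradiction. The barrier $e^{-t/\mu_0}$ also works, because the extra term $-c\lambda_k(1-\mu/\mu_0)e^{-t/\mu_0}$ is nonnegative. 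Thus $\beta=1/\mu_0$ suffices, with no dependence on $\lambda_1$ or $\alpha$. Letting $c\uparrow 1$ gives $\mathcal A_k(T)\ge\exp\bigl(-\int_0^T\mu^{-1}\bigr)\ge e^{-T/\mu_0}$. This is exactly the paper's bound, obtained with the first-hitting-time technique of Lemma~\ref{lem:Ak_positive} and without any monotonicity claim.
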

\begin{proof}
Recall that
\[
u_0(x)=\sum_{k=1}^\infty u_{0,k} e_k(x),
\qquad
u_{0,k}=\frac{\psi_k-\mathcal B_k(T)}{\mathcal A_k(T)},
\qquad
\lambda_k=\Big(\frac{k\pi}{l}\Big)^2.
\]
\textit{A uniform lower bound for $\mathcal A_k(T)$}.
For $\mathcal A_k$ we have (Lemma~\ref{lem:Ak_positive}) $\mathcal A_k(t)>0$ on $[0,T]$ and
$\mathcal A_k\in AC([0,T])$.
From the Volterra reduction \eqref{eq:volterra_y_Ak} with negative right-hand side and positive kernel,
the solution $y_k=\mathcal A_k'$ is nonpositive a.e., hence $\mathcal A_k$ is nonincreasing.
Therefore, for $t\in(0,T)$,
\[
\partial_t^\alpha \mathcal A_k(t)
=\frac{1}{\Gamma(1-\alpha)}\int_0^t (t-s)^{-\alpha}\,\mathcal A_k'(s)\,ds \le 0.
\]
Using the mode equation \eqref{eq:Ak_mode},
\[
\mu(t)\lambda_k\,\mathcal A_k'(t)+\lambda_k\,\mathcal A_k(t)=-\partial_t^\alpha \mathcal A_k(t)\ge 0,
\]

hence $\mathcal A_k'(t)\ge -\mu(t)^{-1}\mathcal A_k(t)$ for a.e. $t\in(0,T)$.
By Gr\"onwall's inequality,
\begin{equation}\label{eq:Ak_lower_bound}
\mathcal A_k(T)\ge \exp\!\Bigl(-\int_0^T \frac{1}{\mu(s)}\,ds\Bigr)\ge e^{-T/\mu_0}.
\end{equation}
In particular, $\sup_{k\ge1}\mathcal A_k(T)^{-1}\le e^{T/\mu_0}$.

\textit{A bound for $\mathcal B_k(T)$}.
From the uniform estimate \eqref{eq:AkBk_uniform_bounds_thm} (evaluated at $t=T$) there exists a constant $C_*>0$ (independent of $k$ and of the data) such that
\begin{equation}\label{eq:BkT_bound}
|\mathcal B_k(T)|
\le \frac{C_*}{\lambda_k}\int_0^T |f_k(s)|\,ds.
\end{equation}

\textit{Estimate of the Fourier coefficients $u_{0,k}$}.
Combining \eqref{eq:Ak_lower_bound}--\eqref{eq:BkT_bound} yields
\[
|u_{0,k}|
\le \frac{1}{\mathcal A_k(T)}\Bigl(|\psi_k|+|\mathcal B_k(T)|\Bigr)
\le e^{T/\mu_0}\Bigl(|\psi_k|+\frac{C_*}{\lambda_k}\int_0^T |f_k(s)|\,ds\Bigr).
\]
Using $(a+b)^2\le 2a^2+2b^2$ and Cauchy--Schwarz in time,
\[
\Bigl(\int_0^T |f_k(s)|\,ds\Bigr)^2 \le T\int_0^T |f_k(s)|^2\,ds,
\qquad
\frac{1}{\lambda_k^2}\le \frac{1}{\lambda_1^2}.
\]

By Parseval,
\[
\|u_0\|_{L^2(0,l)}^2=\frac{l}{2}\sum_{k\ge1}|u_{0,k}|^2,
\quad
\|\psi\|_{L^2(0,l)}^2=\frac{l}{2}\sum_{k\ge1}|\psi_k|^2,
\quad
\|f\|_{L^2(0,T;L^2(0,l))}^2=\frac{l}{2}\sum_{k\ge1}\int_0^T |f_k(t)|^2\,dt.
\]
Hence,
\[
\|u_0\|_{L^2(0,l)}^2
\le C\Bigl(\|\psi\|_{L^2(0,l)}^2 + T\,\|f\|_{L^2(0,T;L^2(0,l))}^2\Bigr),
\]
which implies \eqref{eq:stab_phi_L2_generic} (after taking square roots and adjusting $C$).

Finally, \eqref{eq:stab_phi_diff_L2_generic} follows by applying the same argument to the difference
data $(\psi_1-\psi_2,f_1-f_2)$, since the reconstruction is linear in $(\psi,f)$.
\end{proof}


\section{Finite difference approximation of the direct problem}\label{sec:direct_num}
In this section, we describe the finite-difference discretisation used in the implementation for the
time-fractional pseudo-parabolic direct problem
\begin{equation}\label{eq:direct_frac}
\partial_t^\alpha u(x,t)-u_{xx}(x,t)-\mu(t)\,u_{xxt}(x,t)=f(x,t),
\qquad (x,t)\in(0,l)\times(0,T),
\end{equation}
subject to the homogeneous Dirichlet boundary conditions $u(0,t)=u(l,t)=0$.
We assume that $\mu\in C([0,T])$ and $f$ is given. The Caputo derivative is discretised by an $L1$-type scheme
on a graded temporal mesh, while the spatial derivatives are approximated by second-order central
differences. The resulting method leads to a tridiagonal linear system at each time level\cite{Altybay_26}.

\subsection{Discretisations}\label{subsec:disc}
In this subsection, we describe spatial and temporal discretisations and the full discrete scheme.

\textbf{Spatial discretisation.} Let $x_i=ih$ for $i=0,\dots,N$, where $h=l/N$. For a grid vector
$\mathbf{v}=(v_1,\dots,v_{N-1})^\top\in\mathbb{R}^{N-1}$ we enforce the boundary conditions by setting
$v_0=v_N=0$ and define the standard second-order discrete Laplacian $L_h:\mathbb{R}^{N-1}\to\mathbb{R}^{N-1}$ by
\begin{equation}\label{eq:lap_frac}
(L_h\mathbf{v})_i=\frac{v_{i-1}-2v_i+v_{i+1}}{h^2},\qquad i=1,\dots,N-1.
\end{equation}
The matrix representation of $L_h$ is symmetric tridiagonal and $(-L_h)$ is symmetric positive definite on
$\mathbb{R}^{N-1}$.

We denote the interior nodal vector of the numerical solution by
\[
\mathbf{u}^k:=(u_1^k,\dots,u_{N-1}^k)^\top,\qquad u_i^k\approx u(x_i,t_k),
\]
where $\{t_k\}_{k=0}^M$ is the temporal grid defined below.

\textbf{L1 approximation on graded meshes.}\label{subsec:caputo_L1}
Let $0=t_0<t_1<\cdots<t_M=T$ be a time grid. In the implementation, we allow graded meshes of the form
\begin{equation}\label{eq:graded_mesh}
t_k=T\Bigl(\frac{k}{M}\Bigr)^r,\,\ k=0,1,\dots,M,\,\, r\ge 1(\text{uniform mesh corresponds to } r=1),
\end{equation}
with $\tau_k:=t_k-t_{k-1}$ and $\tau_{\max}:=\max_{1\le k\le M}\tau_k$.

For each $k\ge1$, we approximate the Caputo derivative at 
$t_k$  using the $L1$ scheme
\begin{equation}\label{eq:L1_caputo}
\partial_t^\alpha u(x_i,t_k)
\approx
\frac{1}{\Gamma(1-\alpha)}
\sum_{j=1}^{k}
\int_{t_{j-1}}^{t_j}\frac{u_t(x_i,s)}{(t_k-s)^\alpha}\,ds
\;\approx\;
\sum_{j=1}^{k} w_{k,j}\,\bigl(u_i^j-u_i^{j-1}\bigr),
\end{equation}
where the weights $w_{k,j}$ depend on the mesh.
For a nonuniform grid, they can be written as
\begin{equation}\label{eq:L1_weights}
w_{k,j}
=
\frac{1}{\Gamma(2-\alpha)}\,d_{k,j},
\quad
d_{k,j}
=
\frac{(t_k-t_{j-1})^{1-\alpha}-(t_k-t_j)^{1-\alpha}}{t_j-t_{j-1}},
\quad 1\le j\le k.
\end{equation}

\textbf{Fully discrete scheme.}\label{subsec:fully_discrete}
Using $u_{xxt}=(u_{xx})_t$ and the discrete Laplacian $L_h$, we approximate at time $t_k$:
\[
u_{xx}(x_i,t_k)\approx (L_h\mathbf{u}^k)_i,
\qquad
u_{xxt}(x_i,t_k)\approx \frac{(L_h\mathbf{u}^k)_i-(L_h\mathbf{u}^{k-1})_i}{\tau_k}.
\]
Let $\boldsymbol{\mu}^k:=\mu(t_k)$ and
$
\mathbf{f}^k:=\bigl(f(x_1,t_k),\dots,f(x_{N-1},t_k)\bigr)^\top.
$
Combining \eqref{eq:L1_caputo}--\eqref{eq:L1_weights} with the above spatial discretisations yields, for $k=1,\dots,M$,
the linear system
\begin{equation}\label{eq:scheme_matrix_form}
\Bigl(\frac{d_{k,k}}{\Gamma(2-\alpha)}I - \Bigl(1+\frac{\mu^k}{\tau_k}\Bigr)L_h\Bigr)\mathbf{u}^k
=
\mathbf{r}^k
+
\mathbf{f}^k
-
\frac{\mu^k}{\tau_k}L_h\mathbf{u}^{k-1},
\end{equation}
where $\mathbf{r}^k$ collects the history (memory) contributions of the $L1$ approximation,
\begin{equation}\label{eq:history_term}
\mathbf{r}^k
=
\frac{d_{k,1}}{\Gamma(2-\alpha)}\mathbf{u}^0
+
\frac{1}{\Gamma(2-\alpha)}
\sum_{j=1}^{k-1}\bigl(d_{k,j+1}-d_{k,j}\bigr)\mathbf{u}^j .
\end{equation}
Here $d_{k,j}$ are defined in \eqref{eq:L1_weights}. The left-hand side matrix in \eqref{eq:scheme_matrix_form} is
tridiagonal because $L_h$ is tridiagonal. Therefore, at each time step the system is solved efficiently by the
Thomas algorithm.

Given an initial state $u_0(x)=u(x,0)$, we set
\[
\boldsymbol{u_{0,h}}
:=
\bigl(u_0(x_1),\dots,u_0(x_{N-1})\bigr)^\top\in\mathbb{R}^{N-1},
\qquad
\mathbf{u}^0=\boldsymbol{u_{0,h}}.
\]
The direct solver \eqref{eq:scheme_matrix_form}--\eqref{eq:history_term} then produces the discrete trajectory
$\{\mathbf{u}^k\}_{k=0}^M$ and, in particular, the terminal vector $\mathbf{u}^M$ used in the inverse step.

\subsection{Stability}\label{subsec:stab_conv}
In this subsection, we study the stability of the fully discrete method defined by
\eqref{eq:scheme_matrix_form}--\eqref{eq:history_term}. The Caputo derivative is approximated by the graded-mesh $L1$ formula, with the weights $d_{k,j}$ given in \eqref{eq:L1_weights}. Throughout this subsection, we assume that
\begin{equation}\label{eq:ass_mu}
\mu\in C([0,T]),
\qquad
0<\mu_0\le \mu(t)\le \mu_{\max},
\qquad t\in[0,T].
\end{equation}
We use the discrete inner product and norms introduced earlier. In
particular,
\[
(-L_h\mathbf v,\mathbf v)_h
=
\|\nabla_h\mathbf v\|_h^2.
\]

\textbf{Preliminary properties of the graded-mesh $L1$ operator.}
Define the graded-mesh $L1$ operator componentwise by
\begin{equation}\label{eq:disc_caputo_def}
\delta_t^\alpha\mathbf v^k
:=
\frac{1}{\Gamma(2-\alpha)}
\sum_{j=1}^{k}
d_{k,j}\bigl(\mathbf v^j-\mathbf v^{j-1}\bigr),
\qquad k\ge1,
\end{equation}
where $d_{k,j}$ are defined in \eqref{eq:L1_weights}. For every admissible temporal mesh, and in particular for the graded mesh considered here, one has
\begin{equation}\label{eq:d_props}
d_{k,j}>0,
\qquad
d_{k,1}\le d_{k,2}\le\cdots\le d_{k,k},
\qquad 1\le j\le k.
\end{equation}

\textbf{Solvability at each time level.}
Let $\tau_k=t_k-t_{k-1}$ and $\mu^k=\mu(t_k)$. The time-stepping
system \eqref{eq:scheme_matrix_form} can be written as
\begin{equation}\label{eq:Ak_system}
\mathbf A^k\mathbf u^k=\mathbf b^k,
\qquad
\mathbf A^k
=
\frac{d_{k,k}}{\Gamma(2-\alpha)}I
-
\left(1+\frac{\mu^k}{\tau_k}\right)L_h,
\end{equation}
where $\mathbf b^k$ contains the history term
\eqref{eq:history_term}, the source $\mathbf f^k$, and the contribution
\[
-\frac{\mu^k}{\tau_k}L_h\mathbf u^{k-1}.
\]

\begin{lemma}[Unique solvability]\label{lem:SPD_graded}
Under assumption \eqref{eq:ass_mu}, the matrix $\mathbf A^k$ is
symmetric positive definite on $\mathbb R^{N-1}$ for every $k\ge1$.
Consequently, the system \eqref{eq:Ak_system} has a unique solution.
\end{lemma}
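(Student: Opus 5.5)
The plan is to verify symmetry and positive definiteness directly from the structure of $\mathbf A^k$ as a linear combination of $I$ and $-L_h$ with positive coefficients. Unique solvability then follows because a symmetric positive definite matrix is nonsingular.

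\textbf{Symmetry.} The matrix $L_h$ in \eqref{eq:lap_frac}, with the Dirichlet values $v_0=v_N=0$ eliminated, is the symmetric tridiagonal matrix with $-2/h^2$ on the diagonal and $1/h^2$ on the two off-diagonals. Since $I$ is symmetric, $\mathbf A^k$ in \eqref{eq:Ak_system} is symmetric as a real linear combination of symmetric matrices.

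\textbf{Positivity of the coefficients.} I will show that
\[
c_k:=\frac{d_{k,k}}{\Gamma(2-\alpha)}>0,\qquad \beta_k:=1+\frac{\mu^k}{\tau_k}>0 .
\]
From \eqref{eq:L1_weights} we have $d_{k,k}=\tau_k^{1-\alpha}/\tau_k=\tau_k^{-\alpha}>0$, because $t_k-t_k=0$. Also $\Gamma(2-\alpha)>0$ for $\alpha\in(0,1)$. For the second coefficient, \eqref{eq:ass_mu} gives $\mu^k\ge\mu_0>0$ and the mesh gives $\tau_k>0$, so $\beta_k>1$. The monotonicity \eqref{eq:d_props} is not needed here.

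\textbf{Positive definiteness.} For $\mathbf v\ne0$ we have
\[
(\mathbf A^k\mathbf v,\mathbf v)_h=c_k\|\mathbf v\|_h^2+\beta_k(-L_h\mathbf v,\mathbf v)_h .
\]
Summation by parts with $v_0=v_N=0$ gives $(-L_h\mathbf v,\mathbf v)_h=\|\nabla_h\mathbf v\|_h^2\ge0$. The right-hand side is therefore at least $c_k\|\mathbf v\|_h^2>0$. Equivalently, $-L_h$ has the known eigenvalues $\frac{4}{h^2}\sin^2\!\big(\frac{j\pi h}{2l}\big)>0$ for $j=1,\dots,N-1$. The eigenvalues of $\mathbf A^k$ are then $c_k+\beta_k\frac{4}{h^2}\sin^2\!\big(\frac{j\pi h}{2l}\big)\ge c_k>0$.

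Hence $\mathbf A^k$ is SPD, so it is invertible and \eqref{eq:Ak_system} has a unique solution for each right-hand side $\mathbf b^k$. Proceeding inductively in $k$, the whole discrete trajectory is uniquely determined from $\mathbf u^0$.

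There is no real obstacle here. The only point needing care is confirming that the discrete Laplacian with eliminated boundary values is exactly symmetric and negative definite, which follows from the summation-by-parts identity.
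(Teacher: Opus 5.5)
Your proposal is correct and follows the same route as the paper, which likewise writes $(\mathbf A^k\mathbf v,\mathbf v)_h=\frac{d_{k,k}}{\Gamma(2-\alpha)}\|\mathbf v\|_h^2+\bigl(1+\frac{\mu^k}{\tau_k}\bigr)\|\nabla_h\mathbf v\|_h^2$ and concludes from $d_{k,k}>0$, $\mu^k>0$ and the positive definiteness of $-L_h$. You are a bit more explicit than the paper: you check symmetry directly, compute $d_{k,k}=\tau_k^{-\alpha}$, and give the eigenvalues of $-L_h$.
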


\begin{proof}
For any $\mathbf v\ne\mathbf0$,
\[
(\mathbf A^k\mathbf v,\mathbf v)_h
=
\frac{d_{k,k}}{\Gamma(2-\alpha)}
\|\mathbf v\|_h^2
+
\left(1+\frac{\mu^k}{\tau_k}\right)
\|\nabla_h\mathbf v\|_h^2.
\]
Since $d_{k,k}>0$, $\mu^k>0$, and $-L_h$ is symmetric positive
definite, the right-hand side is strictly positive.
\end{proof}

Let $ \mathcal A_h:=-L_h.$
Since $\mathcal A_h$ is symmetric positive definite, there exists an
orthonormal basis
$\{\boldsymbol\varphi_q\}_{q=1}^{N-1}$, with respect to
$(\cdot,\cdot)_h$, such that
\[
\mathcal A_h\boldsymbol\varphi_q
=
\lambda_{q,h}\boldsymbol\varphi_q,
\qquad
0<\lambda_{1,h}\le\lambda_{2,h}\le\cdots\le\lambda_{N-1,h}.
\]
The discrete Poincar\'e inequality implies
\begin{equation}\label{eq:lambda_lower_bound}
\lambda_{1,h}^{-1/2}\le C_P,
\end{equation}
where $C_P$ is independent of $h$.

\begin{theorem}[Unconditional stability]\label{thm:stab_graded}
Let \eqref{eq:ass_mu} hold, and let
$\{\mathbf u^k\}_{k=0}^{M}$ be generated by
\eqref{eq:scheme_matrix_form}--\eqref{eq:history_term}. Then there exists
a constant $C>0$, depending only on
$
T,\,\, \mu_0,\,\, \mu_{\max},\,\, C_P,
$
but independent of $h$, $M$, and the time-step sizes $\tau_k$, such
that, for every $m\le M$,
\begin{align}
&\max_{0\le k\le m}\|\mathbf u^k\|_h^2
+
\max_{0\le k\le m}
\mu^k\|\nabla_h\mathbf u^k\|_h^2
+
\sum_{k=1}^{m}
\tau_k\|\nabla_h\mathbf u^k\|_h^2
\notag\\
&\qquad\le
C\left(
\|\mathbf u^0\|_h^2
+
\mu^0\|\nabla_h\mathbf u^0\|_h^2
+
\sum_{k=1}^{m}
\tau_k\|\mathbf f^k\|_h^2
\right),
\label{eq:stab_graded}
\end{align}
where $\mu^0=\mu(0)$.
\end{theorem}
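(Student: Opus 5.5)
The plan is an energy argument in the discrete inner product $(\cdot,\cdot)_h$. By the definition \eqref{eq:disc_caputo_def} of $\delta_t^\alpha$, the scheme \eqref{eq:scheme_matrix_form}--\eqref{eq:history_term} is equivalent to
\[
\delta_t^\alpha\mathbf u^k+\mathcal A_h\mathbf u^k+\frac{\mu^k}{\tau_k}\mathcal A_h\bigl(\mathbf u^k-\mathbf u^{k-1}\bigr)=\mathbf f^k,\qquad k\ge1 .
\]
I will take the inner product of this identity with $2\tau_k\mathbf u^k/\mu^k$. Dividing by $\mu^k$ is deliberate. It lets the pseudo-parabolic term telescope without any regularity of $\mu$ beyond \eqref{eq:ass_mu}, which matters because $\mu$ is only assumed continuous. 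The identity $2(\mathbf a-\mathbf b,\mathbf a)=|\mathbf a|^2-|\mathbf b|^2+|\mathbf a-\mathbf b|^2$, applied in the $\mathcal A_h$-inner product, gives
\[
2\Bigl(\mathcal A_h(\mathbf u^k-\mathbf u^{k-1}),\mathbf u^k\Bigr)_h\ge\|\nabla_h\mathbf u^k\|_h^2-\|\nabla_h\mathbf u^{k-1}\|_h^2 .
\]
The diffusion term gives $2\tau_k\|\nabla_h\mathbf u^k\|_h^2/\mu^k\ge 2\tau_k\|\nabla_h\mathbf u^k\|_h^2/\mu_{\max}$.

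For the source, I use Cauchy--Schwarz, the discrete Poincar\'e inequality \eqref{eq:lambda_lower_bound} and Young's inequality:
\[
\frac{2\tau_k}{\mu^k}(\mathbf f^k,\mathbf u^k)_h\le\frac{\tau_k}{\mu_{\max}}\|\nabla_h\mathbf u^k\|_h^2+\frac{C_P^2\mu_{\max}}{\mu_0^2}\tau_k\|\mathbf f^k\|_h^2 .
\]
The gradient part is absorbed into the diffusion term. Summing over $k=1,\dots,m$ should then give
\[
\|\nabla_h\mathbf u^m\|_h^2+\frac{1}{\mu_{\max}}\sum_{k=1}^m\tau_k\|\nabla_h\mathbf u^k\|_h^2+\sum_{k=1}^m\frac{2\tau_k}{\mu^k}(\delta_t^\alpha\mathbf u^k,\mathbf u^k)_h\le\|\nabla_h\mathbf u^0\|_h^2+C\sum_{k=1}^m\tau_k\|\mathbf f^k\|_h^2 .
\]
Once the fractional sum is shown to be nonnegative, or bounded below by $-C\|\mathbf u^0\|_h^2$, the conclusion follows routinely. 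Taking the maximum over $m$ and using $\mu^k\le\mu_{\max}$ converts the left side into the $\mu^k$-weighted gradient maximum. Poincar\'e gives $\|\mathbf u^k\|_h^2\le C_P^2\|\nabla_h\mathbf u^k\|_h^2$, which yields the $\max\|\mathbf u^k\|_h^2$ term. Finally, $\|\nabla_h\mathbf u^0\|_h^2\le\mu_0^{-1}\mu^0\|\nabla_h\mathbf u^0\|_h^2$ puts the initial term in the stated form. None of this needs a Gr\"onwall step that depends on $\tau_k$, so $C$ depends only on $T,\mu_0,\mu_{\max},C_P$.

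The main obstacle is the fractional term $\sum_k\frac{\tau_k}{\mu^k}(\delta_t^\alpha\mathbf u^k,\mathbf u^k)_h$. The pointwise inequality $(\delta_t^\alpha\mathbf u^k,\mathbf u^k)_h\ge\frac12\delta_t^\alpha\|\mathbf u^k\|_h^2$ follows from \eqref{eq:d_props}, via summation by parts and $2(\mathbf a-\mathbf b,\mathbf a)\ge|\mathbf a|^2-|\mathbf b|^2$. However, $\delta_t^\alpha\|\mathbf u^k\|_h^2$ carries negative history contributions. A crude bound on them costs a factor $\sum_k\tau_k^{1-\alpha}$, which is not uniform in $M$, so that route does not work.

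What I will try first is to use the fact that the $L1$ operator is exactly the Caputo derivative of the piecewise linear interpolant $\Pi\mathbf u$. With the $\mathcal A_h$-eigenbasis $\{\boldsymbol\varphi_q\}$ this reduces to scalar sequences. I then aim for a discrete positivity estimate of the form
\[
\sum_{k=1}^m\tau_k\,\delta_t^\alpha v^k\,v^k\ge-C\,T^{1-\alpha}|v^0|^2 ,
\]
using the monotonicity \eqref{eq:d_props} of $d_{k,j}$ in $j$ (Abel summation) and the positive-definite type of the kernel $(t-s)^{-\alpha}$.

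The variable weights $1/\mu^k$ may destroy this positivity. If so, the fallback is to test with $2\tau_k\mathbf u^k$ without dividing by $\mu^k$. The pseudo-parabolic term then produces the increments $(\mu^k-\mu^{k-1})\|\nabla_h\mathbf u^{k-1}\|_h^2$. These would be handled by a discrete Gr\"onwall argument, which would require an additional bounded-variation hypothesis on $\mu$.
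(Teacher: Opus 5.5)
Your rewriting of the scheme as $\delta_t^\alpha\mathbf u^k+\mathcal A_h\mathbf u^k+\frac{\mu^k}{\tau_k}\mathcal A_h(\mathbf u^k-\mathbf u^{k-1})=\mathbf f^k$ is correct. So are the telescoping of the pseudo-parabolic term after dividing by $\mu^k$ and the Young--Poincar\'e bound on the source. But the proof is only complete once $\sum_k(\tau_k/\mu^k)(\delta_t^\alpha\mathbf u^k,\mathbf u^k)_h\ge -C\|\mathbf u^0\|_h^2$ is established. That is exactly the step you leave open, and under the hypotheses of the theorem it is false.

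Write $a_{k,j}:=d_{k,j}/\Gamma(2-\alpha)$ and take the uniform mesh with $M=2$. Then $a_{1,1}=a_{2,2}=:a$ and $a_{2,1}=\beta a$ with $\beta=2^{1-\alpha}-1\in(0,1)$. For one mode with $v^0=0$,
\[
\sum_{k=1}^{2}\frac{\tau}{\mu^k}\,\bigl(\delta_t^\alpha v^k\bigr)v^k
=\tau a\Bigl[\frac{(v^1)^2}{\mu^1}+\frac{(v^2)^2-(1-\beta)v^1v^2}{\mu^2}\Bigr].
\]
This form is indefinite once $\mu^1/\mu^2>4/(1-\beta)^2$. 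Since $\mu^1=\mu(T/2)$ and $\mu^2=\mu(T)$, this is allowed by \eqref{eq:ass_mu}. In your Abel-summation language, $b_j:=\sum_{k\ge j}(\tau_k/\mu^k)a_{k,j}$ already fails to be nonincreasing when $\mu^1/\mu^2>1/(1-\beta)$.

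Even your first target, with $\mu$ constant, fails on strongly graded meshes. With $m=2$, $v^0=0$ and $\rho:=\tau_2/\tau_1=2^r-1$, one gets
\[
\Gamma(2-\alpha)\sum_{k=1}^{2}\tau_k\bigl(\delta_t^\alpha v^k\bigr)v^k
=\tau_1^{1-\alpha}\Bigl[(v^1)^2+\rho^{1-\alpha}(v^2)^2-c_\rho\,v^1v^2\Bigr],
\qquad
c_\rho=\rho^{1-\alpha}+\rho^{2-\alpha}-\rho(1+\rho)^{1-\alpha}=\alpha\rho^{1-\alpha}+O(\rho^{-\alpha}).
\]
This is indefinite for $r$ large, whereas \eqref{eq:graded_mesh} admits every $r\ge1$. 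Your fallback adds a bounded-variation hypothesis on $\mu$ that the theorem does not make. So the deferred step is not a technicality; it is where the plan breaks down.

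The idea you are missing is that no positivity of the fractional operator is needed. You already pass to the eigenbasis of $\mathcal A_h$; there, use an $\ell^\infty$-in-time (maximum-principle) argument instead of an $\ell^2$ energy. Each mode satisfies
\[
B_{k,q}\widehat u_q^k=a_{k,1}\widehat u_q^0+\sum_{j=1}^{k-1}\bigl(a_{k,j+1}-a_{k,j}\bigr)\widehat u_q^j+\frac{\mu^k}{\tau_k}\lambda_{q,h}\widehat u_q^{k-1}+\widehat f_q^k,
\qquad
B_{k,q}=a_{k,k}+\Bigl(1+\frac{\mu^k}{\tau_k}\Bigr)\lambda_{q,h}.
\]
By \eqref{eq:d_props} every coefficient of a past value is nonnegative, and these coefficients sum to $B_{k,q}-\lambda_{q,h}<B_{k,q}$. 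Hence $|\widehat u_q^k|\le\max_{j<k}|\widehat u_q^j|+|\widehat f_q^k|/B_{k,q}$. The bound $B_{k,q}\ge\mu_0\lambda_{q,h}/\tau_k$ turns the forcing into $\tau_k|\widehat f_q^k|/(\mu_0\lambda_{q,h})$. Induction then gives $|\widehat u_q^k|\le|\widehat u_q^0|+(\mu_0\lambda_{q,h})^{-1}\sum_{n\le k}\tau_n|\widehat f_q^n|$. Parseval, Minkowski (with weights $1$ and $\lambda_{q,h}^{1/2}$), $\lambda_{q,h}\ge\lambda_{1,h}\ge C_P^{-2}$, and Cauchy--Schwarz in time yield all terms of \eqref{eq:stab_graded}.

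This is the paper's proof. Here $\mu$ enters only through $\mu^k\ge\mu_0$, so its variation is irrelevant. The mesh enters only through \eqref{eq:d_props}, which holds on every mesh because $d_{k,j}$ is the mean of the increasing function $(1-\alpha)(t_k-s)^{-\alpha}$ over $[t_{j-1},t_j]$. It is the pseudo-parabolic contribution $\frac{\mu^k}{\tau_k}\lambda_{q,h}$ in $B_{k,q}$ that supplies the factor $\tau_k$ your energy argument was trying to extract from the fractional term.
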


\begin{proof}
Expand the numerical solution and the source in the eigenbasis of
$\mathcal A_h$:
\[
\mathbf u^k
=
\sum_{q=1}^{N-1}\widehat u_q^k\boldsymbol\varphi_q,
\qquad
\mathbf f^k
=
\sum_{q=1}^{N-1}\widehat f_q^k\boldsymbol\varphi_q.
\]
Introduce
\[
a_{k,j}
:=
\frac{d_{k,j}}{\Gamma(2-\alpha)}.
\]
For each eigenmode $q$, the fully discrete scheme takes the form
\begin{align}
&\left[
a_{k,k}
+
\left(1+\frac{\mu^k}{\tau_k}\right)\lambda_{q,h}
\right]\widehat u_q^k
\notag\\
&\quad=
a_{k,1}\widehat u_q^0
+
\sum_{j=1}^{k-1}
\bigl(a_{k,j+1}-a_{k,j}\bigr)\widehat u_q^j
+
\frac{\mu^k}{\tau_k}
\lambda_{q,h}\widehat u_q^{k-1}
+
\widehat f_q^k .
\label{eq:modal_scheme_stability}
\end{align}
Set
\[
B_{k,q}
:=
a_{k,k}
+
\left(1+\frac{\mu^k}{\tau_k}\right)\lambda_{q,h}.
\]
By \eqref{eq:d_props}, all coefficients of the previous time levels in
\eqref{eq:modal_scheme_stability} are nonnegative. Moreover,
\begin{align*}
&a_{k,1}
+
\sum_{j=1}^{k-1}
\bigl(a_{k,j+1}-a_{k,j}\bigr)
+
\frac{\mu^k}{\tau_k}\lambda_{q,h}
\\
&\qquad=
a_{k,k}
+
\frac{\mu^k}{\tau_k}\lambda_{q,h}
=
B_{k,q}-\lambda_{q,h}
<
B_{k,q}.
\end{align*}
Therefore, with
\[
M_q^{k-1}
:=
\max_{0\le j\le k-1}|\widehat u_q^j|,
\]
equation \eqref{eq:modal_scheme_stability} gives
\[
|\widehat u_q^k|
\le
\frac{B_{k,q}-\lambda_{q,h}}{B_{k,q}}M_q^{k-1}
+
\frac{|\widehat f_q^k|}{B_{k,q}}
\le
M_q^{k-1}
+
\frac{|\widehat f_q^k|}{B_{k,q}}.
\]
Since
\[
B_{k,q}
\ge
\frac{\mu^k}{\tau_k}\lambda_{q,h}
\ge
\frac{\mu_0}{\tau_k}\lambda_{q,h},
\]
we obtain
\[
|\widehat u_q^k|
\le
M_q^{k-1}
+
\frac{\tau_k}{\mu_0\lambda_{q,h}}
|\widehat f_q^k|.
\]
An induction over $k$ yields
\begin{equation}\label{eq:modal_stability_bound}
|\widehat u_q^k|
\le
|\widehat u_q^0|
+
\frac{1}{\mu_0\lambda_{q,h}}
\sum_{n=1}^{k}\tau_n|\widehat f_q^n|,
\qquad 1\le k\le M.
\end{equation}

Using Parseval's identity, Minkowski's inequality, and
\eqref{eq:lambda_lower_bound}, we obtain
\begin{align*}
\|\mathbf u^k\|_h
&\le
\|\mathbf u^0\|_h
+
\frac{1}{\mu_0\lambda_{1,h}}
\sum_{n=1}^{k}\tau_n\|\mathbf f^n\|_h
\\
&\le
\|\mathbf u^0\|_h
+
\frac{C_P^2\sqrt{T}}{\mu_0}
\left(
\sum_{n=1}^{k}
\tau_n\|\mathbf f^n\|_h^2
\right)^{1/2}.
\end{align*}
Consequently,
\begin{equation}\label{eq:L2_stability_intermediate}
\max_{0\le k\le m}\|\mathbf u^k\|_h^2
\le
C\left(
\|\mathbf u^0\|_h^2
+
\sum_{n=1}^{m}
\tau_n\|\mathbf f^n\|_h^2
\right).
\end{equation}

Similarly, multiplying \eqref{eq:modal_stability_bound} by
$\lambda_{q,h}^{1/2}$, summing over $q$, and again using Parseval's
identity gives
\begin{align*}
\|\nabla_h\mathbf u^k\|_h
&\le
\|\nabla_h\mathbf u^0\|_h
+
\frac{1}{\mu_0\lambda_{1,h}^{1/2}}
\sum_{n=1}^{k}\tau_n\|\mathbf f^n\|_h
\\
&\le
\|\nabla_h\mathbf u^0\|_h
+
\frac{C_P\sqrt{T}}{\mu_0}
\left(
\sum_{n=1}^{k}
\tau_n\|\mathbf f^n\|_h^2
\right)^{1/2}.
\end{align*}
Therefore,
\begin{equation}\label{eq:H1_stability_intermediate}
\max_{0\le k\le m}
\|\nabla_h\mathbf u^k\|_h^2
\le
C\left(
\|\nabla_h\mathbf u^0\|_h^2
+
\sum_{n=1}^{m}
\tau_n\|\mathbf f^n\|_h^2
\right).
\end{equation}
Since $\mu^k\le\mu_{\max}$ and
$\mu^0\ge\mu_0$, it follows that
\[
\max_{0\le k\le m}
\mu^k\|\nabla_h\mathbf u^k\|_h^2
\le
C\left(
\mu^0\|\nabla_h\mathbf u^0\|_h^2
+
\sum_{n=1}^{m}
\tau_n\|\mathbf f^n\|_h^2
\right).
\]
Finally,
\[
\sum_{k=1}^{m}
\tau_k\|\nabla_h\mathbf u^k\|_h^2
\le
T\max_{0\le k\le m}
\|\nabla_h\mathbf u^k\|_h^2.
\]
Combining this estimate with
\eqref{eq:L2_stability_intermediate} and
\eqref{eq:H1_stability_intermediate} proves
\eqref{eq:stab_graded}.
\end{proof}

\subsection{Convergence}\label{subsubsec:conv_graded}
In this subsection, we establish an error estimate for the fully discrete
scheme \eqref{eq:scheme_matrix_form}--\eqref{eq:history_term} on the
graded temporal mesh introduced in Subsection~\ref{subsec:disc}.

For the graded mesh \eqref{eq:graded_mesh}, the time-step sizes are
nondecreasing, and hence
\[
\tau_{\max}=\tau_M
=
T\left[
1-\left(1-\frac1M\right)^r
\right]
\le \frac{rT}{M}.
\]
Thus, for every fixed $r\ge1$,
\begin{equation}\label{eq:tau_max_graded}
\tau_{\max}=O(M^{-1}).
\end{equation}

Let $R_h$ denote the nodal restriction operator
\[
R_hv:=
\bigl(v(x_1),\dots,v(x_{N-1})\bigr)^\top.
\]
For the exact solution $u$, define
\[
\mathbf U^k:=R_hu(\cdot,t_k),
\qquad k=0,\dots,M.
\]
Let $\{\mathbf u^k\}_{k=0}^{M}$ be the numerical solution generated by
\eqref{eq:scheme_matrix_form}--\eqref{eq:history_term}, and define the
nodal error by
\[
\mathbf e^k:=\mathbf U^k-\mathbf u^k,
\qquad k=0,\dots,M.
\]
Since the exact initial data are imposed in the numerical scheme,
\begin{equation}\label{eq:error_initial_zero}
\mathbf e^0=\mathbf0.
\end{equation}

\textbf{Additional regularity assumptions.} Theorem~\ref{thm:wellposed_frac} establishes classical solvability under assumptions \ref{F1}--\ref{F4}. The convergence analysis below is conditional on additional space--time regularity of the exact solution. These stronger assumptions are not asserted to follow from \ref{F1}--\ref{F4}; they are imposed in order to obtain uniform consistency estimates for the central-difference approximation, the backward-difference approximation of $u_{xxt}$, and the $L1$ approximation of the Caputo derivative.

We assume that \eqref{eq:ass_mu} holds and that \begin{equation}\label{eq:reg_assumption_smooth} u\in C^1\bigl([0,T];H^4(0,l)\bigr) \cap C^2\bigl([0,T];H^3(0,l)\bigr), \qquad u(\cdot,t)\in H_0^1(0,l) \quad\text{for all }t\in[0,T], \end{equation} with \begin{equation}\label{eq:reg_assumption_bound} \max_{0\le t\le T} \left( \|u(\cdot,t)\|_{H^4(0,l)} + \|u_t(\cdot,t)\|_{H^4(0,l)} + \|u_{tt}(\cdot,t)\|_{H^3(0,l)} \right) \le C_u. \end{equation} In particular, \[ u_{xxxx t}\in C\bigl([0,T];L^2(0,l)\bigr), \qquad u_{xxtt}\in C\bigl([0,T];H^1(0,l)\bigr). \] Assumption \eqref{eq:reg_assumption_smooth} excludes the weak initial singularity that commonly occurs in time-fractional evolution problems. The treatment of such nonsmooth solutions requires time-dependent consistency estimates and a nonuniform discrete fractional Gr\"onwall argument and is beyond the scope of the present analysis.

\textbf{Consistency errors.}
Define the consistency errors by
\begin{align}
\boldsymbol\xi_\alpha^k
&:=
\delta_t^\alpha\mathbf U^k
-
R_h\partial_t^\alpha u(\cdot,t_k),
\label{eq:caputo_consistency_def}
\\
\boldsymbol\xi_x^k
&:=
L_h\mathbf U^k
-
R_hu_{xx}(\cdot,t_k),
\label{eq:space_consistency_def}
\\
\boldsymbol\xi_{xt}^k
&:=
\frac{L_h\mathbf U^k-L_h\mathbf U^{k-1}}{\tau_k}
-
R_hu_{xxt}(\cdot,t_k),
\label{eq:pseudo_consistency_def}
\end{align}
for $1\le k\le M$.

For every $v\in H^4(0,l)$, the standard central-difference estimate
gives
\begin{equation}\label{eq:central_difference_general}
\|L_hR_hv-R_hv_{xx}\|_h
\le Ch^2\|v\|_{H^4(0,l)},
\end{equation}
where $C$ is independent of $h$. Consequently,
\begin{equation}\label{eq:space_consistency_bound}
\|\boldsymbol\xi_x^k\|_h
\le Ch^2,
\qquad 1\le k\le M.
\end{equation}

To estimate the consistency error associated with the pseudo-parabolic
term, introduce
\[
\overline{u_t}^{\,k}
:=
\frac1{\tau_k}
\int_{t_{k-1}}^{t_k}u_t(\cdot,s)\,ds.
\]
Then
\[
\frac{\mathbf U^k-\mathbf U^{k-1}}{\tau_k}
=
R_h\overline{u_t}^{\,k},
\]
and hence
\begin{align}
\boldsymbol\xi_{xt}^k
&=
L_hR_h\overline{u_t}^{\,k}
-
R_hu_{xxt}(\cdot,t_k)
\notag\\
&=
\left(
L_hR_h\overline{u_t}^{\,k}
-
R_h(\overline{u_t}^{\,k})_{xx}
\right)
+
R_h\left(
(\overline{u_t}^{\,k})_{xx}
-u_{xxt}(\cdot,t_k)
\right).
\label{eq:pseudo_consistency_decomp}
\end{align}

By \eqref{eq:central_difference_general} and
\eqref{eq:reg_assumption_bound},
\begin{equation}\label{eq:pseudo_space_part}
\left\|
L_hR_h\overline{u_t}^{\,k}
-
R_h(\overline{u_t}^{\,k})_{xx}
\right\|_h
\le Ch^2.
\end{equation}

For the second term in \eqref{eq:pseudo_consistency_decomp}, set
\[
z^k
:=
(\overline{u_t}^{\,k})_{xx}
-u_{xxt}(\cdot,t_k).
\]
Then
\[
z^k
=
\frac1{\tau_k}
\int_{t_{k-1}}^{t_k}
\left(
u_{xxt}(\cdot,s)-u_{xxt}(\cdot,t_k)
\right)\,ds.
\]
Using the fundamental theorem of calculus, we obtain
\begin{align*}
\|z^k\|_{H^1(0,l)}
&\le
\frac1{\tau_k}
\int_{t_{k-1}}^{t_k}
\int_s^{t_k}
\|u_{xxtt}(\cdot,\eta)\|_{H^1(0,l)}
\,d\eta\,ds
\\
&\le C\tau_k.
\end{align*}
Moreover, the one-dimensional Sobolev embedding
$H^1(0,l)\hookrightarrow C[0,l]$ gives
\begin{align}
\|R_hz^k\|_h^2
&=
h\sum_{i=1}^{N-1}|z^k(x_i)|^2
\le
l\|z^k\|_{L^\infty(0,l)}^2
\le
C\|z^k\|_{H^1(0,l)}^2.
\label{eq:nodal_restriction_bound}
\end{align}
Therefore,
\[
\|R_hz^k\|_h\le C\tau_k.
\]
Combining this estimate with \eqref{eq:pseudo_space_part} yields
\begin{equation}\label{eq:pseudo_consistency_bound}
\|\boldsymbol\xi_{xt}^k\|_h
\le
C\bigl(h^2+\tau_k\bigr),
\qquad 1\le k\le M.
\end{equation}

Under the smoothness assumption \eqref{eq:reg_assumption_smooth}, the standard consistency estimate for the nonuniform $L1$ formula gives
\begin{equation}\label{eq:L1_smooth_consistency}
\max_{1\le k\le M}
\|\boldsymbol\xi_\alpha^k\|_h
\le
C\tau_{\max}^{\,2-\alpha}.
\end{equation}
For consistency and convergence analyses of the $L1$ approximation on graded temporal meshes, see, for example, \cite{Stynes17}. The constant $C$ in \eqref{eq:L1_smooth_consistency} is independent of $h$ and
$M$, but may depend on $T$, $\alpha$, the fixed grading parameter $r$, and the regularity constant $C_u$.

Estimate \eqref{eq:L1_smooth_consistency} is used only under \eqref{eq:reg_assumption_smooth}. In general, it does not hold uniformly over all time levels when
\[
u_{tt}(t)=O(t^{\alpha-2})
\qquad\text{as }t\to0^+.
\]

\textbf{Error equation.}
Evaluating the continuous problem at the spatial grid points and subtracting the fully discrete scheme gives
\begin{equation}\label{eq:error_eq_graded_full}
\delta_t^\alpha\mathbf e^k
-
L_h\mathbf e^k
-
\mu^k
\frac{L_h\mathbf e^k-L_h\mathbf e^{k-1}}{\tau_k}
=
\boldsymbol\rho^k,
\qquad
1\le k\le M,
\end{equation}
where $\mathbf e^0=\mathbf0$ and
\begin{equation}\label{eq:rho_definition}
\boldsymbol\rho^k
=
\boldsymbol\xi_\alpha^k
-
\boldsymbol\xi_x^k
-
\mu^k\boldsymbol\xi_{xt}^k.
\end{equation}
Using \eqref{eq:ass_mu},
\eqref{eq:space_consistency_bound},
\eqref{eq:pseudo_consistency_bound}, and
\eqref{eq:L1_smooth_consistency}, we obtain
\begin{equation}\label{eq:rho_bound_full}
\|\boldsymbol\rho^k\|_h
\le
C\left(
h^2+\tau_k+\tau_{\max}^{\,2-\alpha}
\right),
\qquad
1\le k\le M.
\end{equation}

\begin{theorem}[Convergence of the fully discrete scheme]
\label{thm:conv_graded_full}
Assume that \eqref{eq:ass_mu},
\eqref{eq:reg_assumption_smooth}, and
\eqref{eq:reg_assumption_bound} hold. Let
$\{\mathbf u^k\}_{k=0}^{M}$ be generated by
\eqref{eq:scheme_matrix_form}--\eqref{eq:history_term}, and let
$\mathbf e^k=\mathbf U^k-\mathbf u^k$. Then there exists a constant
$C>0$, independent of $h$ and $M$, such that
\begin{align}
&\max_{0\le k\le M}\|\mathbf e^k\|_h
+
\max_{0\le k\le M}
\sqrt{\mu^k}\,
\|\nabla_h\mathbf e^k\|_h
+
\left(
\sum_{k=1}^{M}
\tau_k\|\nabla_h\mathbf e^k\|_h^2
\right)^{1/2}
\notag\\
&\qquad\le
C\left(
h^2+\tau_{\max}
+\tau_{\max}^{\,2-\alpha}
\right).
\label{eq:conv_rate_full}
\end{align}
Since $2-\alpha>1$, the estimate simplifies, for sufficiently small
$\tau_{\max}$, to
\begin{align}
&\max_{0\le k\le M}\|\mathbf e^k\|_h
+
\max_{0\le k\le M}
\|\nabla_h\mathbf e^k\|_h
+
\left(
\sum_{k=1}^{M}
\tau_k\|\nabla_h\mathbf e^k\|_h^2
\right)^{1/2}
\notag\\
&\qquad\le
C\left(h^2+\tau_{\max}\right).
\label{eq:conv_rate_simplified}
\end{align}
For the graded mesh with fixed $r\ge1$, one has
$\tau_{\max}=O(M^{-1})$. Therefore, the method is second-order convergent in space in the discrete nodal norms appearing above and first-order convergent in time. The temporal order is limited by the backward-difference approximation of the pseudo-parabolic term
$u_{xxt}$.
\end{theorem}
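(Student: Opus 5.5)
The error $\mathbf e^k$ satisfies the same linear scheme as the numerical solution, with zero initial datum \eqref{eq:error_initial_zero} and source $\boldsymbol\rho^k$ in place of $\mathbf f^k$. The plan is therefore to apply the stability result, Theorem~\ref{thm:stab_graded}, to the error equation \eqref{eq:error_eq_graded_full}. Concretely, we compare \eqref{eq:error_eq_graded_full} with \eqref{eq:scheme_matrix_form}--\eqref{eq:history_term}. Using the definition \eqref{eq:disc_caputo_def} of $\delta_t^\alpha$ and rearranging the $L1$ sum as in \eqref{eq:history_term}, the error equation is exactly the scheme with $\mathbf u^k\mapsto\mathbf e^k$ and $\mathbf f^k\mapsto\boldsymbol\rho^k$. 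Since the proof of Theorem~\ref{thm:stab_graded} used only the scheme's structure (the modal recursion \eqref{eq:modal_scheme_stability}), and not any property of $\mathbf f^k$, it applies verbatim.

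With $\mathbf e^0=\mathbf0$, this gives
\[
\max_{0\le k\le M}\|\mathbf e^k\|_h^2+\max_{0\le k\le M}\mu^k\|\nabla_h\mathbf e^k\|_h^2+\sum_{k=1}^M\tau_k\|\nabla_h\mathbf e^k\|_h^2\le C\sum_{k=1}^M\tau_k\|\boldsymbol\rho^k\|_h^2 .
\]
We then insert the residual bound \eqref{eq:rho_bound_full}. Using $\tau_k\le\tau_{\max}$ and $\sum_k\tau_k=T$,
\[
\sum_{k=1}^M\tau_k\|\boldsymbol\rho^k\|_h^2\le C\,T\bigl(h^2+\tau_{\max}+\tau_{\max}^{2-\alpha}\bigr)^2 .
\]
Taking square roots and using $\sqrt{a+b+c}\le\sqrt a+\sqrt b+\sqrt c$ to split the left-hand side into its three terms yields \eqref{eq:conv_rate_full}.

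For the simplified estimate \eqref{eq:conv_rate_simplified}, two observations suffice. First, $\tau_{\max}^{2-\alpha}\le\tau_{\max}$ once $\tau_{\max}\le1$, because $2-\alpha>1$. Second, $\sqrt{\mu^k}\ge\sqrt{\mu_0}$ by \eqref{eq:ass_mu}, so the weighted gradient term controls $\|\nabla_h\mathbf e^k\|_h$ after dividing by $\sqrt{\mu_0}$ and absorbing this into $C$. The rate in $M$ then follows from \eqref{eq:tau_max_graded}.

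The main obstacle is the first step: verifying that Theorem~\ref{thm:stab_graded} genuinely applies to \eqref{eq:error_eq_graded_full}, with constants independent of $h$, $M$ and $\tau_k$. Two points must be checked. The first is that the sign and monotonicity properties \eqref{eq:d_props} of the graded-mesh weights hold, so that the coefficients of the previous time levels in the modal recursion are nonnegative. The second is that the source enters only through $\widehat\rho^k_q/B_{k,q}$ with $B_{k,q}\ge\mu_0\lambda_{q,h}/\tau_k$, which is what produces the $\sum\tau_n|\widehat\rho_q^n|$ accumulation. If a direct citation of Theorem~\ref{thm:stab_graded} seems too quick, I would redo the modal argument explicitly for $\mathbf e^k$: derive $|\widehat e_q^k|\le(\mu_0\lambda_{q,h})^{-1}\sum_{n\le k}\tau_n|\widehat\rho_q^n|$, then sum over $q$ with weights $1$ and $\lambda_{q,h}^{1/2}$, using \eqref{eq:lambda_lower_bound}.
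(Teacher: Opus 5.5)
Your proposal is correct and follows the paper's proof essentially step for step: apply Theorem~\ref{thm:stab_graded} to the error equation \eqref{eq:error_eq_graded_full} with $\mathbf e^0=\mathbf 0$, insert \eqref{eq:rho_bound_full} together with $\sum_k\tau_k=T$, take square roots, and then use $2-\alpha>1$ and $\mu^k\ge\mu_0$ for the simplified bound. (You bound $\tau_k\le\tau_{\max}$ directly, whereas the paper splits into squares and uses $\sum_k\tau_k^3\le T\tau_{\max}^2$.) One small slip: to pass from $X^2+Y^2+Z^2\le K^2$ to $X+Y+Z\le CK$ you need $X+Y+Z\le\sqrt3\,(X^2+Y^2+Z^2)^{1/2}$, or simply that each of $X,Y,Z$ is at most $K$; the inequality you cite, $\sqrt{a+b+c}\le\sqrt a+\sqrt b+\sqrt c$, points the other way.
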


\begin{proof}
Applying the stability estimate of
Theorem~\ref{thm:stab_graded} to the error equation
\eqref{eq:error_eq_graded_full}, and using
$\mathbf e^0=\mathbf0$, gives
\begin{align}
&\max_{0\le k\le M}\|\mathbf e^k\|_h^2
+
\max_{0\le k\le M}
\mu^k\|\nabla_h\mathbf e^k\|_h^2
+
\sum_{k=1}^{M}
\tau_k\|\nabla_h\mathbf e^k\|_h^2
\notag\\
&\qquad\le
C\sum_{k=1}^{M}
\tau_k\|\boldsymbol\rho^k\|_h^2.
\label{eq:error_stability_applied}
\end{align}

By \eqref{eq:rho_bound_full},
\[
\|\boldsymbol\rho^k\|_h^2
\le
C\left(
h^4+\tau_k^2+
\tau_{\max}^{\,2(2-\alpha)}
\right).
\]
Consequently,
\begin{align*}
\sum_{k=1}^{M}
\tau_k\|\boldsymbol\rho^k\|_h^2
&\le
C\left[
h^4\sum_{k=1}^{M}\tau_k
+
\sum_{k=1}^{M}\tau_k^3
+
\tau_{\max}^{\,2(2-\alpha)}
\sum_{k=1}^{M}\tau_k
\right]
\\
&\le
C\left(
h^4+\tau_{\max}^2
+\tau_{\max}^{\,2(2-\alpha)}
\right),
\end{align*}
where we have used
\[
\sum_{k=1}^{M}\tau_k=T
\]
and
\[
\sum_{k=1}^{M}\tau_k^3
\le
\tau_{\max}^2
\sum_{k=1}^{M}\tau_k
=
T\tau_{\max}^2.
\]
Taking square roots in \eqref{eq:error_stability_applied} proves
\eqref{eq:conv_rate_full}.

Since $0<\alpha<1$, one has $2-\alpha>1$, and therefore
\[
\tau_{\max}^{\,2-\alpha}
=o(\tau_{\max})
\qquad\text{as }\tau_{\max}\to0.
\]
Moreover, $\mu^k\ge\mu_0>0$, so the weighted discrete-gradient term
controls $\|\nabla_h\mathbf e^k\|_h$. Hence
\eqref{eq:conv_rate_simplified} follows.
\end{proof}

\section{Numerical reconstruction of the initial state}
\label{sec:inverse_num}

In this section, we describe the fully discrete reconstruction of the
unknown initial state $u_0=u(\cdot,0)$ from the final-time measurement
$\psi=u(\cdot,T)$ for problem
\eqref{1.1}--\eqref{finalT}. 

The reconstruction is based on a discretise-then-regularise strategy.
First, the discrete forward operator is constructed by repeated applications of the graded-mesh $L1$ finite-difference solver developed in Section~\ref{sec:direct_num}. The resulting finite-dimensional inverse system is then stabilised by zeroth-order Tikhonov regularisation.

\subsection{Discrete forward operator and inverse relation}
\label{subsec:disc_inverse}
We use the spatial and temporal grids introduced in
Subsection~\ref{subsec:disc}. For an initial vector
$\mathbf u_{0,h}\in\mathbb R^{N-1}$, the fully discrete scheme produces the trajectory $\{\mathbf u^k\}_{k=0}^{M}$, with $\mathbf u^0=\mathbf u_{0,h}$.

Let
\[
\boldsymbol\psi_h
:=
R_h\psi
=
\bigl(\psi(x_1),\dots,\psi(x_{N-1})\bigr)^\top
\]
denote the exact nodal final-time data. When the measurements are perturbed, we write
\[
\boldsymbol\psi_h^\delta
=
\boldsymbol\psi_h+\boldsymbol\eta_h^\delta,
\]
where $\boldsymbol\eta_h^\delta$ denotes the measurement error and $\delta\ge0$ represents its level. The noise-free case corresponds to $\delta=0$.

Let
\[
\mathcal S_{h,\tau}:
\mathbb R^{N-1}\times\mathcal F_h
\longrightarrow
\mathbb R^{N-1}
\]
denote the discrete terminal-state map induced by the fully discrete scheme \eqref{eq:scheme_matrix_form}--\eqref{eq:history_term}. Thus, for a prescribed initial vector $\mathbf v$ and a discrete source $f$,
\[
\mathcal S_{h,\tau}(\mathbf v;f)
=
\mathbf u^M(\mathbf v;f).
\]
Here and below, the dependence of the discrete operators on the temporal mesh, the fractional order, and the coefficient $\mu$ is suppressed when no confusion can arise.

Since the fully discrete scheme is linear with respect to both the solution and the source, the terminal vector admits the decomposition
\begin{equation}\label{eq:affine_split}
\mathbf u^M(\mathbf u_{0,h};f)
=
\mathbf u^M(\mathbf u_{0,h};0)
+
\mathbf u^M(\mathbf0;f).
\end{equation}
Define the forced terminal contribution by
\begin{equation}\label{eq:forced_terminal}
\mathbf g_h
:=
\mathbf u^M(\mathbf0;f),
\end{equation}
and define the homogeneous discrete forward operator
\[
\mathbf F_h\in\mathbb R^{(N-1)\times(N-1)}
\]
through
\begin{equation}\label{eq:homogeneous_forward}
\mathbf u^M(\mathbf v;0)
=
\mathbf F_h\mathbf v,
\qquad
\mathbf v\in\mathbb R^{N-1}.
\end{equation}
Consequently,
\begin{equation}\label{eq:discrete_affine_map}
\mathbf u^M(\mathbf u_{0,h};f)
=
\mathbf F_h\mathbf u_{0,h}+\mathbf g_h.
\end{equation}

For exact data, the discrete inverse relation is
\begin{equation}\label{eq:lin_system_inv_exact}
\mathbf F_h\mathbf u_{0,h}
=
\boldsymbol\psi_h-\mathbf g_h.
\end{equation}
For perturbed measurements, we define
\begin{equation}\label{eq:discrete_data_vector}
\mathbf d_h^\delta
:=
\boldsymbol\psi_h^\delta-\mathbf g_h
\end{equation}
and seek an approximation to the solution of
\begin{equation}\label{eq:lin_system_inv}
\mathbf F_h\mathbf u_{0,h}
\approx
\mathbf d_h^\delta.
\end{equation}

\paragraph{Construction of the discrete forward operator.}
Let
\[
\{\mathbf e^{(m)}\}_{m=1}^{N-1}
\]
be the canonical basis of $\mathbb R^{N-1}$. For each
$m=1,\dots,N-1$, we solve the homogeneous fully discrete problem with
\[
\mathbf u^0=\mathbf e^{(m)},
\qquad
f\equiv0,
\]
and record the corresponding terminal vector
\[
\mathbf k^{(m)}
:=
\mathbf u^M(\mathbf e^{(m)};0).
\]
By linearity, $\mathbf k^{(m)}$ is the $m$-th column of
$\mathbf F_h$, and therefore
\begin{equation}\label{eq:Fh_assembly}
\mathbf F_h
=
\bigl[
\mathbf k^{(1)}
\ \mathbf k^{(2)}
\ \cdots
\ \mathbf k^{(N-1)}
\bigr].
\end{equation}

This construction requires $N-1$ homogeneous forward solves and is computationally practical for the moderate spatial dimensions used in the numerical experiments. For larger systems, explicit assembly may be replaced by a matrix-free iterative method based on products with
$\mathbf F_h$ and its transpose. Such an implementation generally requires the corresponding discrete adjoint operator.

\subsection{Tikhonov regularisation}
\label{subsec:tikhonov}
The continuous inverse problem is uniquely solvable and satisfies the stability estimate established in Section~\ref{sec:wp}. Nevertheless, the discrete reconstruction may be affected by measurement noise,
discretisation errors, and finite-precision arithmetic. We therefore use Tikhonov regularisation as a numerical stabilisation procedure.

For grid vectors, let
\[
(\mathbf v,\mathbf w)_h
:=
h\sum_{i=1}^{N-1}v_iw_i,
\qquad
\|\mathbf v\|_h
:=
(\mathbf v,\mathbf v)_h^{1/2}.
\]
The zeroth-order Tikhonov approximation is defined as the unique minimiser of
\begin{equation}\label{eq:tikhonov_fun}
\mathcal J_\lambda(\mathbf v)
=
\frac12
\|\mathbf F_h\mathbf v-\mathbf d_h^\delta\|_h^2
+
\frac{\lambda}{2}\|\mathbf v\|_h^2,
\qquad
\lambda>0.
\end{equation}
Since both terms contain the same factor $h$, the first-order optimality condition is equivalent to
\begin{equation}\label{eq:normal_eq}
\left(
\mathbf F_h^\top\mathbf F_h+\lambda I
\right)
\mathbf u_{0,h}^{\lambda,\delta}
=
\mathbf F_h^\top\mathbf d_h^\delta.
\end{equation}
The coefficient matrix in \eqref{eq:normal_eq} is symmetric positive definite for every $\lambda>0$; hence, the minimiser is unique.

For the moderate dimensions considered in the numerical experiments, \eqref{eq:normal_eq} can be solved by a dense Cholesky solver. From the viewpoint of numerical linear algebra, an equivalent augmented least-squares formulation,
\[
\begin{bmatrix}
\mathbf F_h\\
\sqrt{\lambda}\,I
\end{bmatrix}
\mathbf u_{0,h}^{\lambda,\delta}
\approx
\begin{bmatrix}
\mathbf d_h^\delta\\
\mathbf0
\end{bmatrix},
\]
may instead be solved by QR factorisation or the singular value decomposition. This avoids the explicit squaring of the condition number associated with the formation of $\mathbf F_h^\top\mathbf F_h$.

\subsection{Choice of the regularisation parameter}
\label{subsec:parameter_choice}
The regularisation parameter $\lambda$ determines the balance between
fidelity to the measured terminal data and suppression of unstable or
noise-dominated components. A value of $\lambda$ that is too small may
lead to excessive sensitivity to measurement and discretisation errors,
whereas a value that is too large introduces an unnecessarily strong
regularisation bias.

In the numerical experiments, $\lambda$ is selected using the
L-curve criterion. Let
\[
\Lambda
=
\{\lambda_1,\dots,\lambda_J\}
\]
be a logarithmically distributed set of positive candidate parameters.
For each $\lambda\in\Lambda$, we compute
$\mathbf u_{0,h}^{\lambda,\delta}$ from
\eqref{eq:normal_eq} and evaluate the residual and solution norms
\begin{align}
\rho(\lambda)
&:=
\left\|
\mathbf F_h\mathbf u_{0,h}^{\lambda,\delta}
-\mathbf d_h^\delta
\right\|_h,
\label{eq:lcurve_residual}
\\
\eta(\lambda)
&:=
\left\|
\mathbf u_{0,h}^{\lambda,\delta}
\right\|_h.
\label{eq:lcurve_solution}
\end{align}
The L-curve is the parametric curve
\begin{equation}\label{eq:lcurve}
\lambda
\longmapsto
\bigl(
\log\rho(\lambda),
\log\eta(\lambda)
\bigr).
\end{equation}
The selected parameter is taken near the corner of this curve, where the transition from the data-fitting regime to the regularisation-dominated regime occurs. In the discrete implementation, the corner is identified by locating a candidate parameter near the maximum curvature of the log--log L-curve.

For the numerical examples considered below, representative L-curves indicated a parameter of order $10^{-10}$ for the noise-free tests and of order $10^{-6}$ for the noisy-data tests. Accordingly, we use
$
\lambda=10^{-10}
$
in the noise-free validation experiments and
$
\lambda=10^{-6}
$
in the experiments with perturbed final-time data. These are empirical choices for the present test problems and are not intended as universal regularisation parameters. The values are kept fixed within each group of experiments in order to facilitate comparisons between different fractional orders and test configurations.

\subsection{Reconstruction procedure}
\label{subsec:alg_inverse}
Algorithm~\ref{alg:inverse_frac_pp} summarises the reconstruction procedure.

\begin{algorithm}[H]
\caption{Tikhonov reconstruction using the time-fractional
pseudo-parabolic forward solver}
\label{alg:inverse_frac_pp}
\begin{algorithmic}[1]
\Require Spatial grid size $N$; temporal mesh
$\{t_k\}_{k=0}^{M}$; measured terminal data
$\boldsymbol\psi_h^\delta\in\mathbb R^{N-1}$;
regularisation parameter $\lambda>0$; coefficient $\mu(t)$;
source $f(x,t)$; and the fully discrete forward solver
\eqref{eq:scheme_matrix_form}--\eqref{eq:history_term}.
\Ensure Reconstructed initial vector
$\mathbf u_{0,h}^{\lambda,\delta}\in\mathbb R^{N-1}$.

\State Solve the forward problem with
$\mathbf u^0=\mathbf0$ and the prescribed source $f$.
\State Set
$\mathbf g_h:=\mathbf u^M(\mathbf0;f)$.

\For{$m=1,\dots,N-1$}
    \State Solve the homogeneous forward problem with
    $\mathbf u^0=\mathbf e^{(m)}$ and $f\equiv0$.
    \State Set
    $\mathbf F_h(:,m):=\mathbf u^M(\mathbf e^{(m)};0)$.
\EndFor

\State Form
$\mathbf d_h^\delta:=\boldsymbol\psi_h^\delta-\mathbf g_h$.
\State Compute $\mathbf u_{0,h}^{\lambda,\delta}$ from
\eqref{eq:normal_eq}.
\State Return $\mathbf u_{0,h}^{\lambda,\delta}$.
\end{algorithmic}
\end{algorithm}

\paragraph{Verification and error measures.}
To verify the reconstruction, we solve the forward problem once more using the reconstructed initial state and define the corresponding terminal state by
\[
\widehat{\boldsymbol\psi}_h^{\lambda,\delta}
:=
\mathbf u^M
\bigl(
\mathbf u_{0,h}^{\lambda,\delta};f
\bigr)
=
\mathbf F_h\mathbf u_{0,h}^{\lambda,\delta}
+\mathbf g_h.
\]

When the exact initial and terminal states are available, we set
\[
\mathbf u_{0,h}^{\mathrm{ex}}
:=
R_hu_0,
\qquad
\boldsymbol\psi_h^{\mathrm{ex}}
:=
R_h\psi.
\]
The reconstruction errors for the initial state are measured by
\begin{align}
E_{u_0,\infty}
&:=
\left\|
\mathbf u_{0,h}^{\lambda,\delta}
-
\mathbf u_{0,h}^{\mathrm{ex}}
\right\|_\infty
=
\max_{1\le j\le N-1}
\left|
u_{0,j}^{\lambda,\delta}
-u_0(x_j)
\right|,
\label{eq:error_u0_inf}
\\
E_{u_0,2}
&:=
\left\|
\mathbf u_{0,h}^{\lambda,\delta}
-
\mathbf u_{0,h}^{\mathrm{ex}}
\right\|_{2,h}
\notag\\
&=
\left(
h\sum_{j=1}^{N-1}
\left|
u_{0,j}^{\lambda,\delta}
-u_0(x_j)
\right|^2
\right)^{1/2}.
\label{eq:error_u0_l2}
\end{align}

The corresponding errors in the reconstructed terminal state are defined by
\begin{align}
E_{\psi,\infty}
&:=
\left\|
\widehat{\boldsymbol\psi}_h^{\lambda,\delta}
-
\boldsymbol\psi_h^{\mathrm{ex}}
\right\|_\infty
=
\max_{1\le j\le N-1}
\left|
\widehat{\psi}_j^{\lambda,\delta}
-\psi(x_j)
\right|,
\label{eq:error_psi_inf}
\\
E_{\psi,2}
&:=
\left\|
\widehat{\boldsymbol\psi}_h^{\lambda,\delta}
-
\boldsymbol\psi_h^{\mathrm{ex}}
\right\|_{2,h}
\notag\\
&=
\left(
h\sum_{j=1}^{N-1}
\left|
\widehat{\psi}_j^{\lambda,\delta}
-\psi(x_j)
\right|^2
\right)^{1/2}.
\label{eq:error_psi_l2}
\end{align}
Here, $\|\cdot\|_{2,h}$ denotes the discrete $L^2(0,l)$-norm.

\section{Numerical validation and experiments}\label{sec:numexp}

In this section, we present numerical experiments that demonstrate the practical behaviour of the proposed
regularised reconstruction of the \emph{initial state} $u_0(x)=u(x,0)$ from the \emph{final-time measurement}
$\psi(x)=u(x,T)$.
All computations employ the direct solver from Section~\ref{sec:direct_num} and the discrete inverse formulation
from Section~\ref{sec:inverse_num}.

\subsection{Test setting and implementation details}\label{subsec:test}
Let $0<\alpha<1$ and $(x,t)\in[0,l]\times[0,T]$. We consider the manufactured solution
\[
\mu(t)=1+t,
\qquad
u(x,t)=\bigl(1+t^{\alpha+1}\bigr)\sin\!\Bigl(\frac{\pi x}{l}\Bigr),
\]
together with the forcing term chosen so that $u$ satisfies the time-fractional pseudo-parabolic model,
namely
\[
f(x,t)=\Bigl[
\Gamma(\alpha+2)\,t
+\Bigl(\frac{\pi}{l}\Bigr)^2\bigl(1+t^{\alpha+1}\bigr)
+\mu(t)\Bigl(\frac{\pi}{l}\Bigr)^2(\alpha+1)t^\alpha
\Bigr]
\sin\!\Bigl(\frac{\pi x}{l}\Bigr).
\]
Accordingly, the final-time measurement and initial state are
\[
\psi(x)=u(x,T)
=\bigl(1+T^{\alpha+1}\bigr)\sin\!\Bigl(\frac{\pi x}{l}\Bigr),
\qquad
u_0(x)=u(x,0)
=\sin\!\Bigl(\frac{\pi x}{l}\Bigr).
\]

The final-time measurement is sampled at the interior spatial nodes, as described in
Section~\ref{sec:inverse_num}, and the homogeneous Dirichlet boundary conditions are imposed at all time levels.

\subsection{Noise-free validation}\label{subsec:accuracy}
We begin with a noise-free study to verify that the direct solver and the reconstruction procedure are
mutually consistent. In this setting, the final-time measurement is sampled from the exact formula, i.e.,
\[
\psi_i=\psi(x_i),
\qquad i=1,\dots,N-1.
\]

We fix $T=1$ and refine the space--time grids according to
\[
(N,M)\in\{(50,50),(100,100),(200,200),(400,400)\}.
\]
The reconstruction is performed using the procedure described in
Section~\ref{sec:inverse_num}. In accordance with the L-curve parameter-choice procedure described in
Subsection~\ref{subsec:parameter_choice}, we set
$
\lambda=10^{-10}
$
in the noise-free experiments.

The maximum-norm and discrete $L^2$ errors for $u_0$ and the corresponding final-time errors for
$\psi$ are listed in Table~\ref{tab1}. The decay of the initial-state errors under refinement confirms the
convergence of the overall discrete reconstruction procedure. The final-time errors remain close to the
regularisation level, demonstrating consistency between the inverse reconstruction and the direct solver.

\begin{table}[htbp]
\centering
\caption{Reconstruction errors in the $L^\infty$- and $L^2$-norms with noise-free terminal data.}
\begin{tabular}{c c c c c c}
\hline
$\alpha$ & $(N,M)$ & $E_{u_0,\infty}$ & $E_{u_0,2}$ & $E_{\psi,\infty}$ & $E_{\psi,2}$\\
\hline
\multirow{4}{*}{0.1}
& $(50,50)$   & 5.477e-03 & 3.873e-03 & 1.906e-10 & 1.348e-10 \\
& $(100,100)$ & 2.545e-03 & 1.799e-03 & 1.923e-10 & 1.360e-10 \\
& $(200,200)$ & 1.224e-03 & 8.653e-04 & 1.931e-10 & 1.365e-10 \\
& $(400,400)$ & 5.997e-04 & 4.241e-04 & 1.937e-10 & 1.371e-10 \\
\hline
\multirow{4}{*}{0.3}
& $(50,50)$   & 1.371e-02 & 9.693e-03 & 1.882e-10 & 1.331e-10 \\
& $(100,100)$ & 6.698e-03 & 4.736e-03 & 1.908e-10 & 1.349e-10 \\
& $(200,200)$ & 3.310e-03 & 2.340e-03 & 1.918e-10 & 1.357e-10 \\
& $(400,400)$ & 1.645e-03 & 1.163e-03 & 1.870e-10 & 1.323e-10 \\
\hline
\multirow{4}{*}{0.5}
& $(50,50)$   & 2.138e-02 & 1.512e-02 & 1.861e-10 & 1.316e-10 \\
& $(100,100)$ & 1.055e-02 & 7.460e-03 & 1.890e-10 & 1.336e-10 \\
& $(200,200)$ & 5.239e-03 & 3.704e-03 & 1.886e-10 & 1.334e-10 \\
& $(400,400)$ & 2.610e-03 & 1.845e-03 & 2.028e-10 & 1.434e-10 \\
\hline
\multirow{4}{*}{0.7}
& $(50,50)$   & 2.893e-02 & 2.046e-02 & 1.840e-10 & 1.301e-10 \\
& $(100,100)$ & 1.433e-02 & 1.013e-02 & 1.879e-10 & 1.328e-10 \\
& $(200,200)$ & 7.124e-03 & 5.038e-03 & 1.889e-10 & 1.336e-10 \\
& $(400,400)$ & 3.550e-03 & 2.510e-03 & 1.941e-10 & 1.370e-10 \\
\hline
\multirow{4}{*}{0.9}
& $(50,50)$   & 3.695e-02 & 2.613e-02 & 1.821e-10 & 1.287e-10 \\
& $(100,100)$ & 1.835e-02 & 1.297e-02 & 1.864e-10 & 1.318e-10 \\
& $(200,200)$ & 9.133e-03 & 6.458e-03 & 1.891e-10 & 1.336e-10 \\
& $(400,400)$ & 4.552e-03 & 3.218e-03 & 1.910e-10 & 1.349e-10 \\
\hline
\end{tabular}
\label{tab1}
\end{table}

Figure~\ref{fig1} shows representative reconstructions for several values of $\alpha$ on the fixed grid
$N=M=50$. For each $\alpha$, the left panel compares the exact initial state $u_0(x)$ with its regularised
reconstruction $\widehat{u_0}(x)$, while the right panel compares the exact final-time measurement
$\psi(x)$ with the final-time state $u(x,T;\widehat{u_0})$ obtained by forward propagation of the
reconstructed initial state. The close agreement in both panels confirms the consistency of the inversion
procedure and the direct solver.

\begin{figure}[h]
  \centering
  \begin{minipage}{\linewidth}
    \centering
    \includegraphics[width=0.6\linewidth]{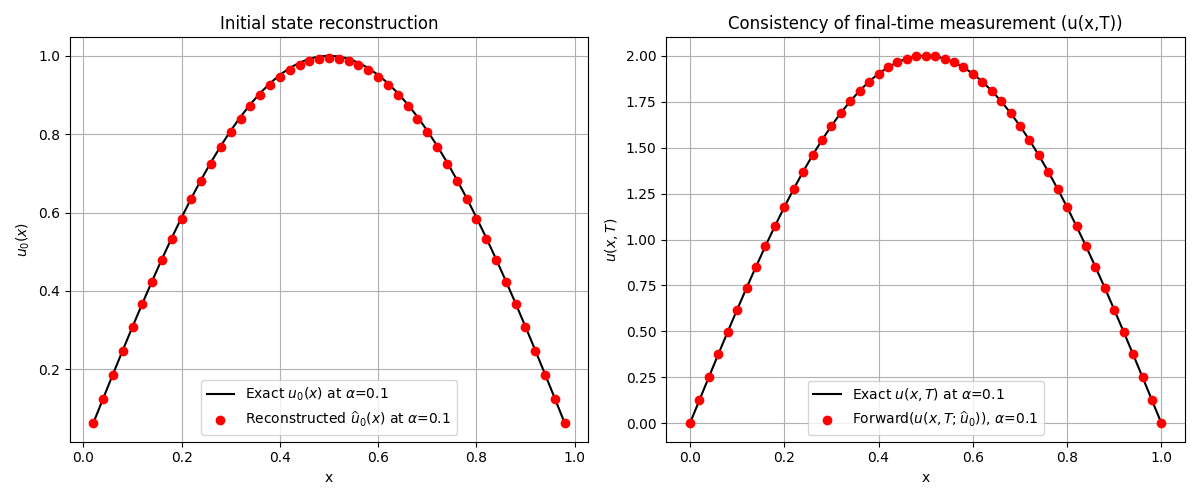}
  \end{minipage}\quad
  \begin{minipage}{\linewidth}
    \centering
    \includegraphics[width=0.6\linewidth]{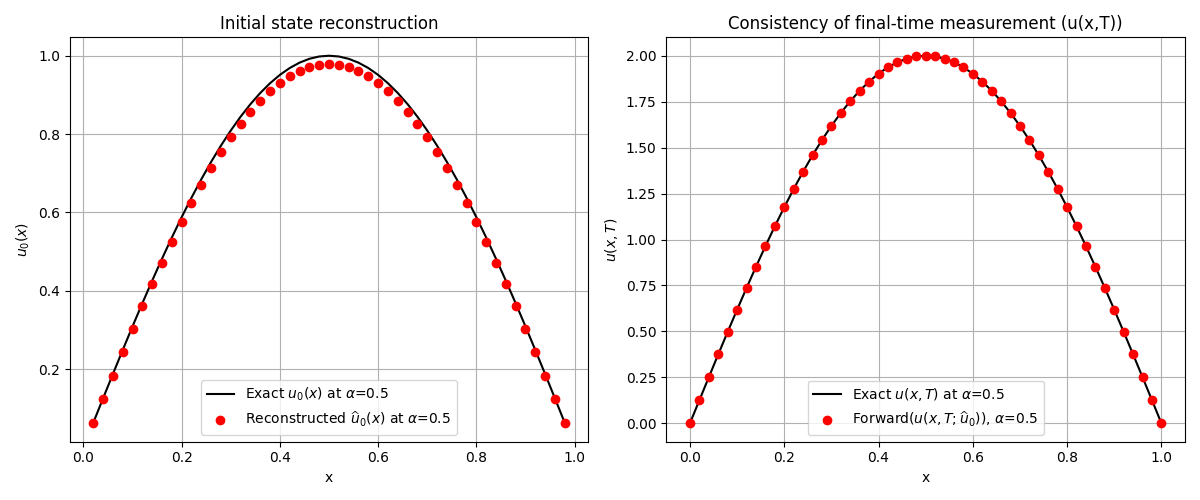}
  \end{minipage}\quad
  \begin{minipage}{\linewidth}
    \centering
    \includegraphics[width=0.6\linewidth]{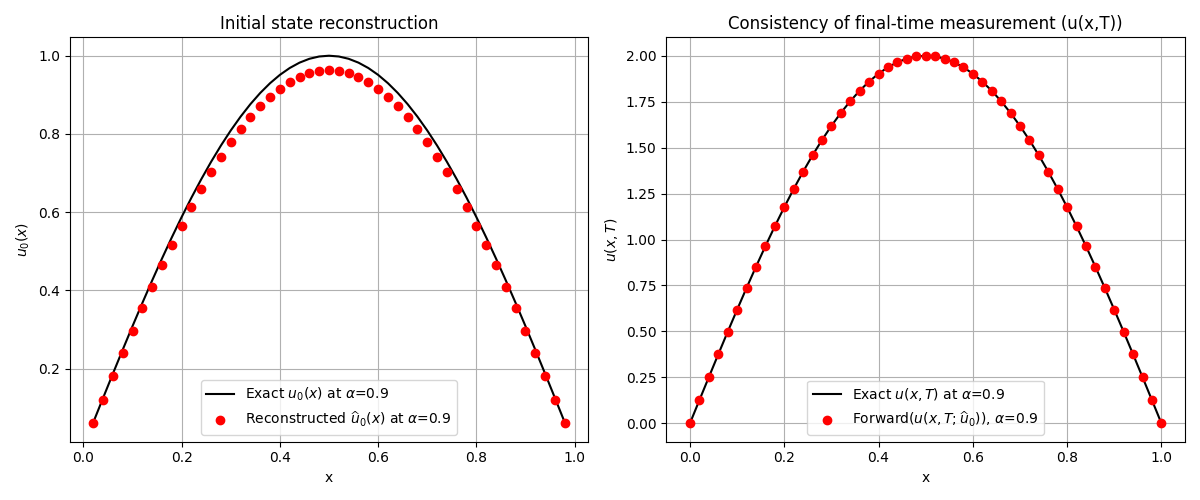}
  \end{minipage}
  \caption{Reconstruction results for different fractional orders.
  Left: exact initial state $u_0(x)$ and reconstructed state $\widehat{u_0}(x)$.
  Right: exact final-time measurement $\psi(x)$ and the final-time state
  $u(x,T;\widehat{u_0})$ generated by evolving the reconstructed initial state
  with the direct solver.}
  \label{fig1}
\end{figure}

Figure~\ref{fig2} compares the surface plot of the exact solution $u(x,t)$ with the reconstructed numerical
approximation obtained from the final-time measurement over the full time interval for several values of
$\alpha$. It also displays the corresponding absolute error surfaces. All simulations use the fixed grid
$N=M=200$.

\begin{figure}[h]
  \centering
  \begin{minipage}{\linewidth}
    \centering
    \includegraphics[width=0.6\linewidth]{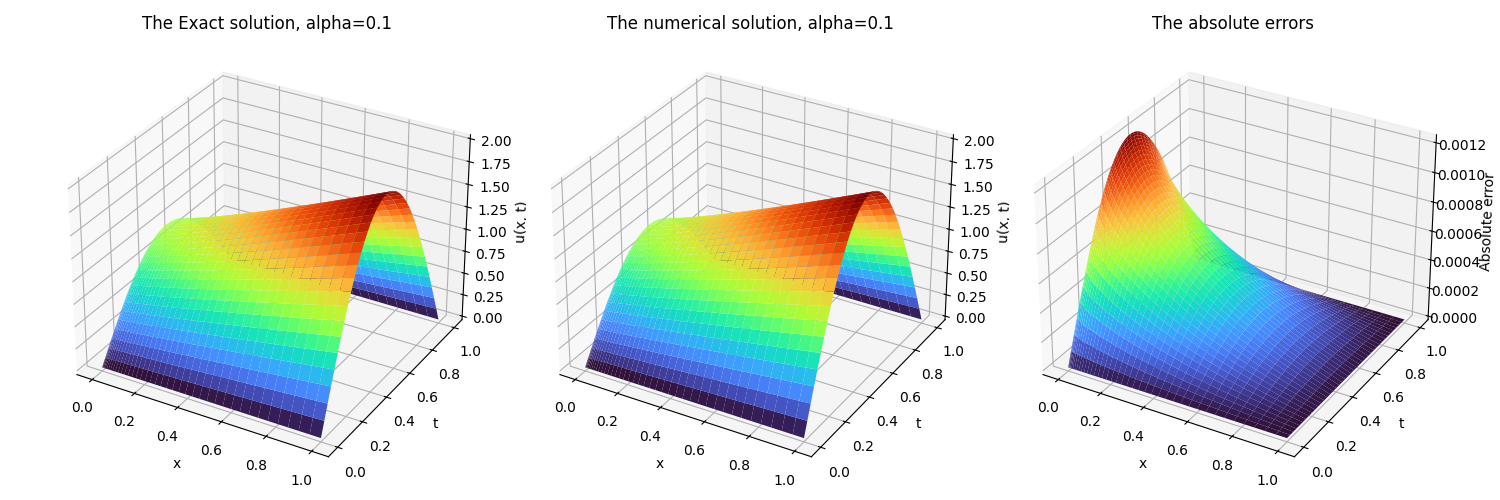}
  \end{minipage}\quad
  \begin{minipage}{\linewidth}
    \centering
    \includegraphics[width=0.6\linewidth]{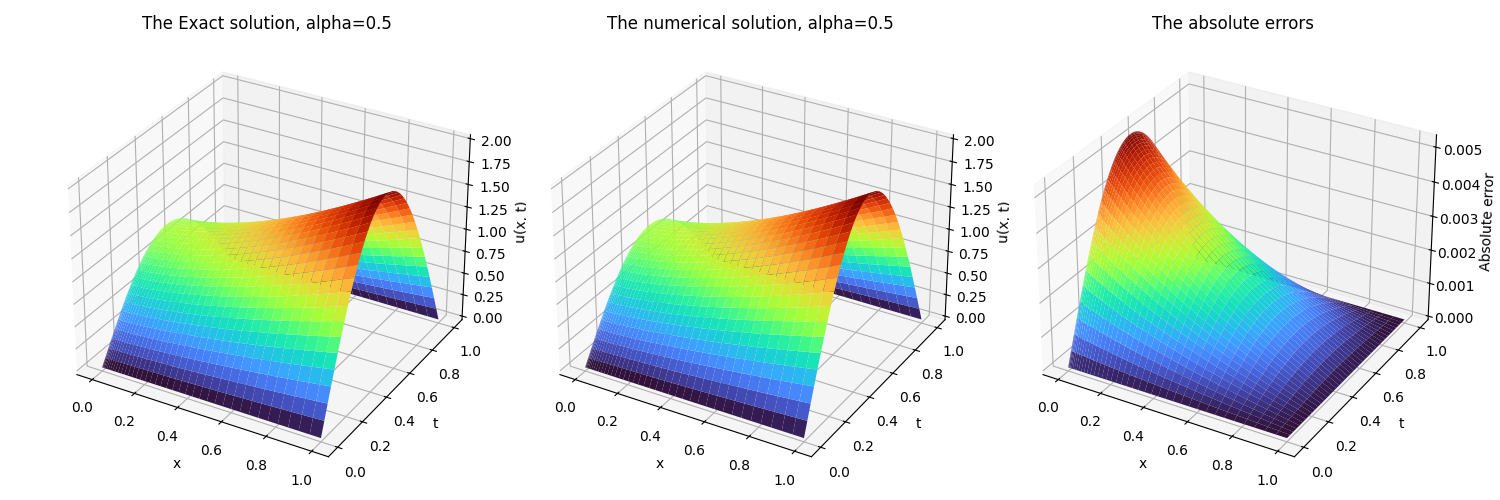}
  \end{minipage}\quad
  \begin{minipage}{\linewidth}
    \centering
    \includegraphics[width=0.6\linewidth]{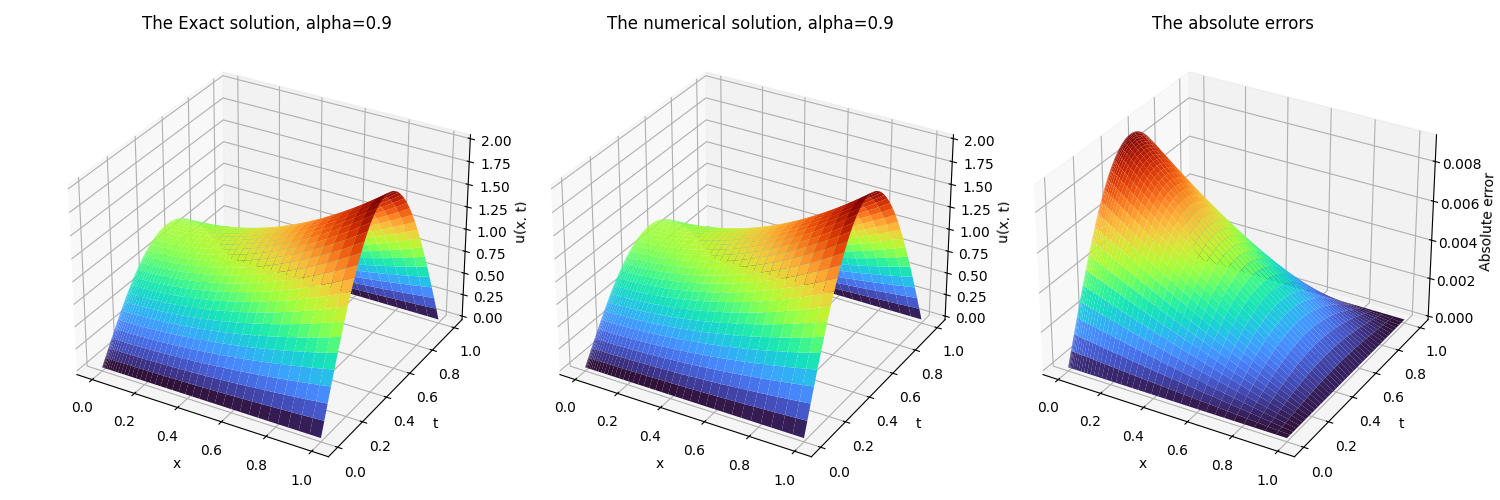}
  \end{minipage}
  \caption{Exact solution (left), reconstructed solution (centre), and absolute
  error (right) for $u(x,t)$ for several values of $\alpha$.}
  \label{fig2}
\end{figure}

\subsection{Reconstruction from noisy final-time data}\label{subsec:noise}
In practice, the final-time measurement $\psi(x)$ is corrupted by measurement noise. To assess robustness, we perturb the discrete final-time vector by additive Gaussian noise with a prescribed relative level.

Let $\boldsymbol{\psi}_h\in\mathbb{R}^{N-1}$ be the clean interior measurement. For $\delta>0$, we set
\begin{equation}\label{eq:noise_model_alt}
\boldsymbol{\psi}_h^{\delta}
=
\boldsymbol{\psi}_h
+
\sigma_\delta\,\boldsymbol{\xi},
\qquad
\boldsymbol{\xi}\sim\mathcal{N}(\mathbf{0},I_{N-1}),
\qquad
\sigma_\delta
:=
\delta\,\frac{\|\boldsymbol{\psi}_h\|_2}{\sqrt{N-1}}.
\end{equation}
With this choice,
\[
\frac{
\mathbb{E}\|\boldsymbol{\psi}_h^{\delta}-\boldsymbol{\psi}_h\|_2^2
}{
\|\boldsymbol{\psi}_h\|_2^2
}
=
\delta^2,
\]
so that $\delta$ represents the root-mean-square relative noise level.

For each noisy data vector $\boldsymbol{\psi}_h^\delta$, the initial state is reconstructed using the
procedure in Section~\ref{sec:inverse_num}. We use the fixed grid
$ N=M=100 $ and regularisation parameter
$
\lambda=10^{-6}.
$
The errors $E_{u_0,\infty}^{\delta}$, $E_{u_0,2}^{\delta}$,
$E_{\psi,\infty}^{\delta}$, and $E_{\psi,2}^{\delta}$ are computed using
the definitions given in Section~\ref{sec:inverse_num}.

Representative results for
$
\delta\in\{1\%,3\%,5\%\}
$
are displayed in Table~\ref{tab2}.

\begin{table}[htbp]
\centering
\caption{Influence of additive noise in the final-time measurement $\psi$ on the recovery of the initial
state $u_0$. Errors are reported for the fixed grid $N=M=100$ and regularisation parameter
$\lambda=10^{-6}$.}
\begin{tabular}{c c c c c c}
\hline
$\alpha$ & noise level $\delta$ & $E_{u_0,\infty}^{\delta}$ &
$E_{u_0,2}^{\delta}$ & $E_{\psi,\infty}^{\delta}$ & $E_{\psi,2}^{\delta}$\\
\hline
\multirow{3}{*}{0.1}
& $1\%$ & 7.536e-02 & 3.655e-02 & 3.724e-02 & 1.828e-02 \\
& $3\%$ & 2.231e-01 & 1.090e-01 & 1.117e-01 & 5.483e-02 \\
& $5\%$ & 3.708e-01 & 1.815e-01 & 1.862e-01 & 9.138e-02 \\
\hline
\multirow{3}{*}{0.3}
& $1\%$ & 7.776e-02 & 3.727e-02 & 3.724e-02 & 1.828e-02 \\
& $3\%$ & 2.255e-01 & 1.095e-01 & 1.117e-01 & 5.483e-02 \\
& $5\%$ & 3.732e-01 & 1.819e-01 & 1.862e-01 & 9.138e-02 \\
\hline
\multirow{3}{*}{0.5}
& $1\%$ & 7.998e-02 & 3.818e-02 & 3.724e-02 & 1.828e-02 \\
& $3\%$ & 2.277e-01 & 1.100e-01 & 1.117e-01 & 5.483e-02 \\
& $5\%$ & 3.753e-01 & 1.823e-01 & 1.862e-01 & 9.138e-02 \\
\hline
\multirow{3}{*}{0.7}
& $1\%$ & 8.226e-02 & 3.936e-02 & 3.724e-02 & 1.828e-02 \\
& $3\%$ & 2.299e-01 & 1.106e-01 & 1.117e-01 & 5.483e-02 \\
& $5\%$ & 3.776e-01 & 1.828e-01 & 1.862e-01 & 9.138e-02 \\
\hline
\multirow{3}{*}{0.9}
& $1\%$ & 8.518e-02 & 4.120e-02 & 3.724e-02 & 1.828e-02 \\
& $3\%$ & 2.328e-01 & 1.115e-01 & 1.117e-01 & 5.483e-02 \\
& $5\%$ & 3.805e-01 & 1.835e-01 & 1.862e-01 & 9.138e-02 \\
\hline
\end{tabular}
\label{tab2}
\end{table}

Table~\ref{tab2} quantifies the sensitivity of the reconstruction to perturbations in the final-time
measurement $\psi$. As expected for a backward problem, the reconstruction error increases as the noise
level $\delta$ grows. Nevertheless, the growth is controlled by the Tikhonov stabilisation: for
$\delta=1\%$, the recovered initial state remains accurate, with
$
E_{u_0,2}^{\delta}
\approx
(3.7\text{--}4.1)\times10^{-2},
\quad
E_{u_0,\infty}^{\delta}
\approx
(7.5\text{--}8.5)\times10^{-2}
$
across all tested fractional orders. When the noise is increased to
$\delta=3\%$ and $\delta=5\%$, the initial-state errors rise in a
near-proportional manner. For example, $E_{u_0,2}^{\delta}$ increases from
approximately $3.7\times10^{-2}$ to $1.1\times10^{-1}$ and then to
$1.8\times10^{-1}$. This indicates stable dependence on the data
perturbation rather than uncontrolled amplification.

A further observation is that the reported errors vary only mildly with $\alpha$ for a fixed value of
$\delta$. This suggests that, for the present test, the regularised inversion is not overly sensitive to
the fractional order and that the dominant factor affecting the accuracy is the measurement noise level.
In addition, the terminal-state errors $E_{\psi,\infty}^{\delta}$ and
$E_{\psi,2}^{\delta}$ remain comparable to the noise magnitude, showing
that the reconstructed initial state produces forward solutions that remain close to the exact terminal
state.

\begin{figure}[t]
\centering
\includegraphics[width=0.6\textwidth]{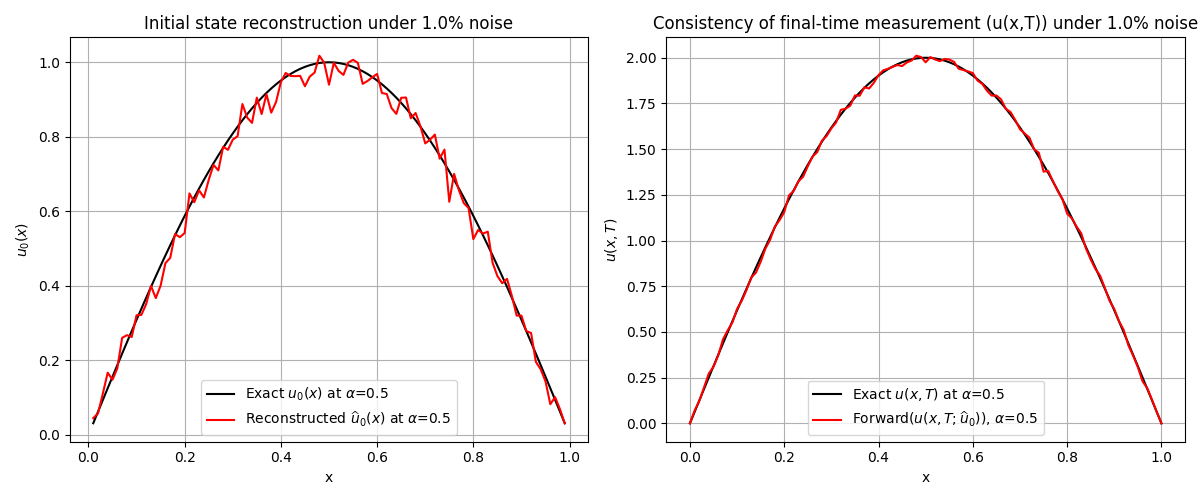}
\hfill
\includegraphics[width=0.6\textwidth]{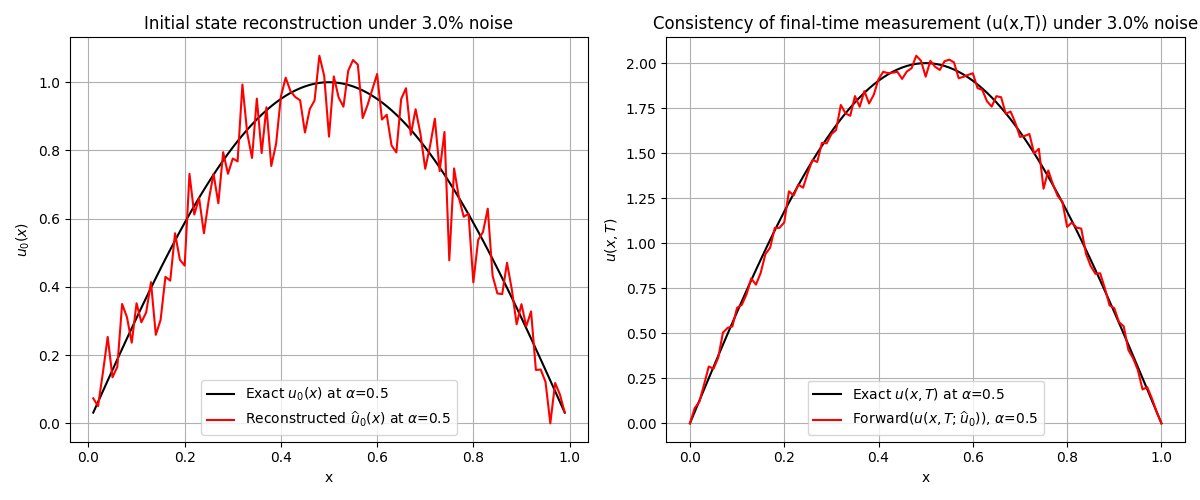}
\hfill
\includegraphics[width=0.6\textwidth]{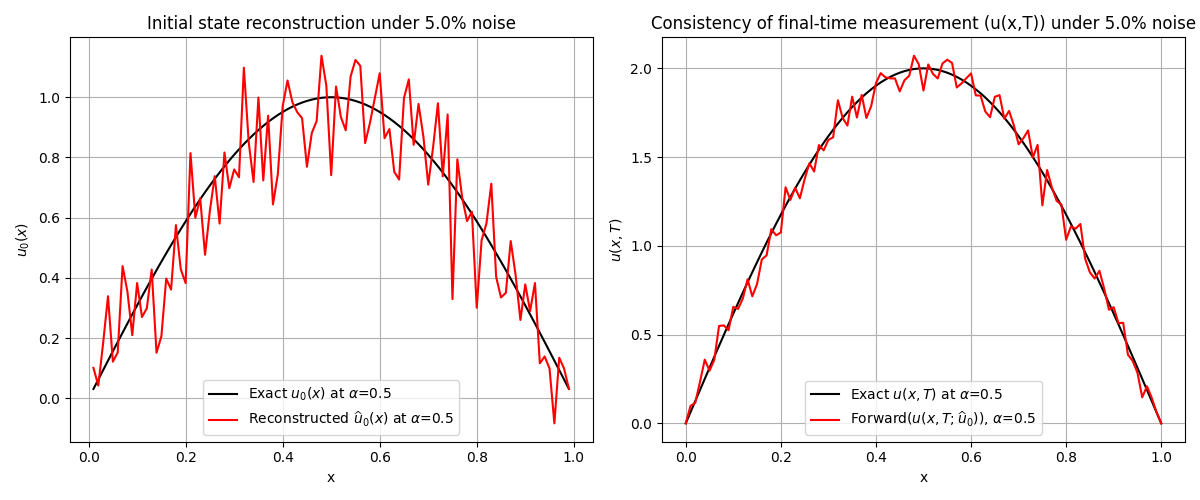}
\caption{Reconstructed initial states for noise levels $delta=1\%$ (top), $\delta=3\%$ (middle), and $\delta=5\%$ (bottom), compared with the exact initial state.} \label{fig:noise_pp}
\end{figure}
Figure~\ref{fig:noise_pp} illustrates the influence of measurement noise on the reconstructed initial state. For the noise level $\delta=1\%$, the reconstructed profile remains very close to the exact initial state. As the
noise level increases to $3\%$ and $5\%$, the deviation becomes more visible, particularly in the amplitude of the reconstructed solution.
Nevertheless, the main shape and spatial structure of $u_0(x)$ are preserved in all three cases, and no significant spurious oscillations are observed. This behaviour confirms the stabilising effect of the Tikhonov
regularisation and is consistent with the quantitative errors reported in Table~\ref{tab2}

Figure~\ref{fig3} shows surface plots of the reconstructed numerical solution $u(x,t)$, obtained from the
final-time measurement, for several noise levels over the full time interval with $\alpha=0.5$.

\begin{figure}[h]
  \centering
  \begin{minipage}{0.3\linewidth}
    \centering
    \includegraphics[width=\linewidth]{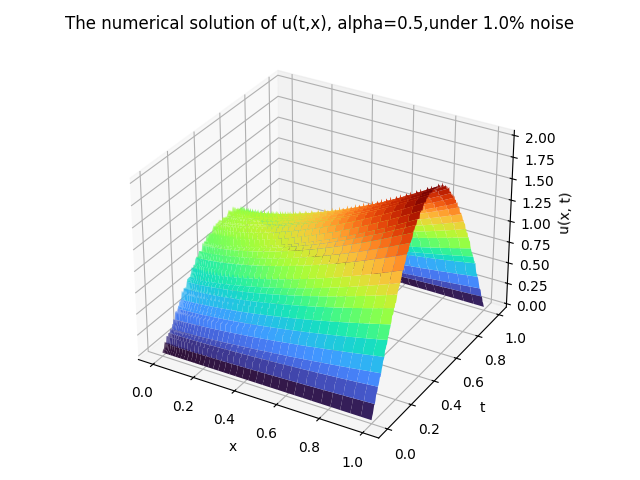}
  \end{minipage}
  \hfill
  \begin{minipage}{0.3\linewidth}
    \centering
    \includegraphics[width=\linewidth]{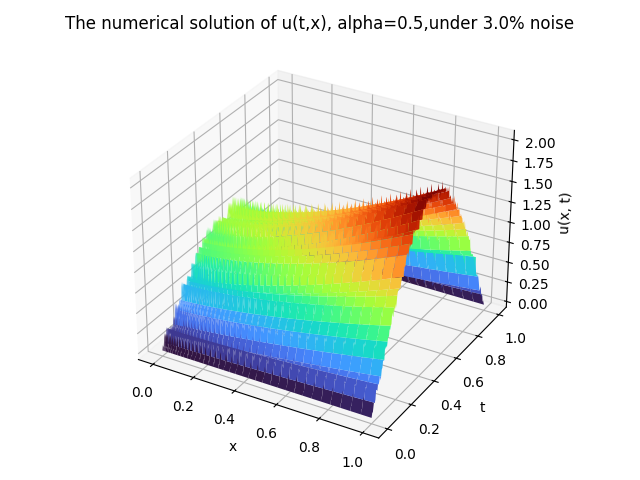}
  \end{minipage}
  \hfill
  \begin{minipage}{0.3\linewidth}
    \centering
    \includegraphics[width=\linewidth]{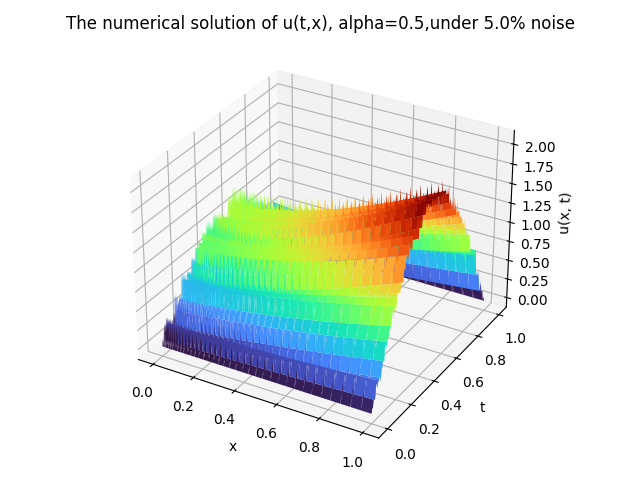}
  \end{minipage}

  \caption{Numerical reconstructions of $u(x,t)$ over the full time interval using noisy data:
  $1\%$ noise (left), $3\%$ noise (middle), and $5\%$ noise (right), for $\alpha=0.5$.}
  \label{fig3}
\end{figure}

Figures~\ref{fig:noise_pp} and \ref{fig3} complement these quantitative results. Even for moderate noise, the reconstructed profiles preserve the main shape of the exact initial state, while higher noise levels
primarily introduce smooth amplitude distortions rather than spurious oscillations. Overall, the experiments confirm that the proposed finite-difference-based forward solver combined with Tikhonov regularisation yields a numerically robust framework for recovering $u_0$ from noisy final-time data.

\section{Conclusion}\label{conc}
In this work, we have provided a comprehensive analysis of the backwards-in-time problem for a time-fractional pseudo-parabolic equation with time-dependent coefficients.
From a theoretical standpoint, we established the existence and
uniqueness of a classical solution by means of a spectral expansion and the theory of weakly singular Volterra integral equations. We also derived stability estimates showing the continuous dependence of the initial data on the final-time measurements. Numerical treatments were developed for both the forward and inverse problems. For the direct problem, we constructed a fully discrete finite difference scheme using the $L1$ approximation on graded meshes for the Caputo derivative. We rigorously proved that the scheme is unconditionally stable and convergent, achieving an optimal convergence rate.
Building upon this efficient forward solver, we formulated the inverse reconstruction as a minimisation problem regularised by the Tikhonov method.
Numerical experiments confirmed the theoretical findings, demonstrating that the proposed algorithm effectively recovers the unknown initial state even in the presence of noise in the terminal data.
Future research may extend this framework to multi-dimensional domains or nonlinear source terms, where the interplay between memory effects and nonlinearity poses further challenges.

\section*{Data availability}
Data will be made available on request.

\section*{Acknowledgements}
This work was supported by the Ministry of Science and Higher Education of the Republic of Kazakhstan (No. AP27508473).



\begin{thebibliography}{00}

\bibitem{BER_1989}
G.I. Barenblatt, V.M. Entov, V.M. Ryzhik,
\textit{Theory of Fluid Flows Through Natural Rocks},
Kluwer Academic Publishers, Dordrecht, 1990.

\bibitem{Pad_2004}
V. Padr\'on,
Effect of aggregation on population recovery modeled by a forward--backward
pseudoparabolic equation,
Trans. Am. Math. Soc. 356 (2004) 2739--2756.
\url{https://doi.org/10.1090/S0002-9947-03-03340-3}

\bibitem{Ting_1963}
T.W. Ting,
Certain non-steady flows of second-order fluids,
Arch. Ration. Mech. Anal. 14 (1963) 1--26.
\url{https://doi.org/10.1007/BF00250690}

\bibitem{BBM_1972}
T.B. Benjamin, J.L. Bona, J.J. Mahony,
Model equations for long waves in nonlinear dispersive systems,
Philos. Trans. R. Soc. Lond. A 272 (1972) 47--78.
\url{https://doi.org/10.1098/rsta.1972.0032}

\bibitem{Hui_1968}
R.R. Huilgol,
A second-order fluid of the differential type,
Int. J. Non-Linear Mech. 3 (1968) 471--482.
\url{https://doi.org/10.1016/0020-7462(68)90032-2}

\bibitem{BZK_1960}
G.I. Barenblatt, I.P. Zheltov, I.N. Kochina,
Basic concepts in the theory of seepage of homogeneous liquids in fissured
rocks,
J. Appl. Math. Mech. 24 (1960) 1286--1303.
\url{https://doi.org/10.1016/0021-8928(60)90107-6}

\bibitem{AKS_2011}
A.B. Al'shin, M.O. Korpusov, A.G. Sveshnikov,
\textit{Blow-up in Nonlinear Sobolev-Type Equations},
De Gruyter Series in Nonlinear Analysis and Applications, vol. 15,
Walter de Gruyter, Berlin, 2011.

\bibitem{ZVY_2010}
V.G. Zvyagin, M.V. Turbin,
Investigation of initial-boundary value problems for mathematical models of
the motion of Kelvin--Voigt fluids,
J. Math. Sci. 168 (2010) 157--308.
\url{https://doi.org/10.1007/s10958-010-9981-2}


\bibitem{AA_1997}
A. Asanov, E.R. Atamanov,
\textit{Nonclassical and Inverse Problems for Pseudoparabolic Equations},
Inverse and Ill-Posed Problems Series, vol. 7,
VSP, Utrecht, 1997.

\bibitem{LP_1997}
A. Lorenzi, E. Paparoni,
Identification problems for pseudoparabolic integrodifferential operator
equations,
J. Inverse Ill-Posed Probl. 5 (1997) 235--253.
\url{https://doi.org/10.1515/jiip.1997.5.3.235}

\bibitem{FU_2004}
V.E. Fedorov, A.V. Urasaeva,
An inverse problem for a linear Sobolev-type equation,
J. Inverse Ill-Posed Probl. 12 (2004) 387--395.
\url{https://doi.org/10.1515/1569394042248210}

\bibitem{LT_2011}
A.Sh. Lyubanova, A. Tani,
An inverse problem for a pseudoparabolic equation of filtration:
existence, uniqueness and regularity,
Appl. Anal. 90 (2011) 1557--1571.
\url{https://doi.org/10.1080/00036811.2010.530258}

\bibitem{Yaman_2012}
M. Yaman,
Blow-up solution and stability for an inverse problem for a pseudo-parabolic
equation,
J. Inequal. Appl. 2012 (2012) 274.
\url{https://doi.org/10.1186/1029-242X-2012-274}

\bibitem{BMSFT_2016}
M. Bertsch, F. Smarrazzo, A. Tesei,
Pseudo-parabolic regularization of forward--backward parabolic equations:
power-type nonlinearities,
J. Reine Angew. Math. 712 (2016) 51--80.
\url{https://doi.org/10.1515/crelle-2013-0123}

\bibitem{LV_2019}
A.Sh. Lyubanova, A.V. Velisevich,
Inverse problems for the stationary and pseudoparabolic equations of diffusion,
Appl. Anal. 98 (2019) 1997--2010.
\url{https://doi.org/10.1080/00036811.2018.1442001}

\bibitem{AAA_2022}
S.N. Antontsev, S.E. Aitzhanov, G.R. Ashurova,
An inverse problem for the pseudo-parabolic equation with $p$-Laplacian,
Evol. Equ. Control Theory 11 (2022) 399--414.
\url{https://doi.org/10.3934/eect.2021005}

\bibitem{KS_2022}
Kh. Khompysh, A.G. Shakir,
An inverse source problem for a nonlinear pseudoparabolic equation with
$p$-Laplacian diffusion and a damping term,
Quaest. Math. 46 (2023) 1889--1914.
\url{https://doi.org/10.2989/16073606.2022.2115951}

\bibitem{RSTT_2022}
M. Ruzhansky, D. Serikbaev, B.T. Torebek, N. Tokmagambetov,
Direct and inverse problems for time-fractional pseudo-parabolic equations,
Quaest. Math. 45 (2022) 1071--1089.
\url{https://doi.org/10.2989/16073606.2021.1928321}

\bibitem{Khompysh_2024}
Kh. Khompysh,
Determination of a time-dependent source in semilinear pseudoparabolic
equations with a Caputo fractional derivative,
Chaos Solitons Fractals 189 (2024) 115716.
\url{https://doi.org/10.1016/j.chaos.2024.115716}

\bibitem{KHSI_2024}
Kh. Khompysh, M.J. Huntul, M.K. Shazyndayeva, M.K. Iqbal,
An inverse problem for a pseudoparabolic equation: existence, uniqueness,
stability and numerical analysis,
Quaest. Math. 47 (2024) 1979--2001.
\url{https://doi.org/10.2989/16073606.2024.2347432}

\bibitem{BT_2024}
B. Bekbolat, N. Tokmagambetov,
Inverse source problem for the pseudoparabolic equation associated with the
Jacobi operator,
Bound. Value Probl. 2024 (2024) 52.
\url{https://doi.org/10.1186/s13661-024-01859-x}

\bibitem{Bockstal_2026}
K. Van Bockstal, Kh. Khompysh,
A time-dependent inverse source problem for a semilinear pseudo-parabolic
equation with Neumann boundary condition,
Comput. Math. Appl. 208 (2026) 97--112.
\url{https://doi.org/10.1016/j.camwa.2026.01.038}

\bibitem{Kilbas06}
A.A. Kilbas, H.M. Srivastava, J.J. Trujillo,
\textit{Theory and Applications of Fractional Differential Equations},
North-Holland Mathematics Studies, vol. 204,
Elsevier, Amsterdam, 2006.


\bibitem{Sakamoto_2011}
K. Sakamoto, M. Yamamoto,
Initial value/boundary value problems for fractional diffusion-wave equations
and applications to some inverse problems,
J. Math. Anal. Appl. 382 (2011) 426--447.

\bibitem{WZ_2018}
T. Wei, Y. Zhang,
The backward problem for a time-fractional diffusion-wave equation in a
bounded domain,
Comput. Math. Appl. 75 (2018) 3632--3648.

\bibitem{YZL_2020}
F. Yang, Y. Zhang, X. Li,
Landweber iterative method for identifying the initial value of a time-space
fractional diffusion-wave equation,
Numer. Algorithms 83 (2020) 1509--1530.

\bibitem{Floridia_2020}
G. Floridia, Z. Li, M. Yamamoto,
Well-posedness for backward problems in time for general time-fractional
diffusion equations,
Atti Accad. Naz. Lincei Rend. Lincei Mat. Appl. 31 (2020) 593--610.

\bibitem{Yamamoto_2020}
G. Floridia, M. Yamamoto,
Backward problems in time for a fractional diffusion-wave equation,
Inverse Probl. 36 (2020) 125016.

\bibitem{LKHC_2020}
N.H. Luc, D. Kumar, L.T.D. Hang, N.H. Can,
On a final value problem for a nonhomogeneous fractional pseudo-parabolic
equation,
Alexandria Eng. J. 59 (2020) 4353--4364.

\bibitem{ZVZ_2021}
Y. Zhang, T. Wei, Y.X. Zhang,
Simultaneous inversion of two initial values for a time-fractional
diffusion-wave equation,
Numer. Methods Partial Differ. Equ. 37 (2021) 24--43.

\bibitem{LZS21}
L.D. Long, Y. Zhou, R. Sakthivel, N.H. Tuan,
Well-posedness and ill-posedness results for a backward problem for a
fractional pseudo-parabolic equation,
J. Appl. Math. Comput. 67 (2021) 175--206.
\url{https://doi.org/10.1007/s12190-020-01488-4}

\bibitem{YXJ_2022}
F. Yang, J.M. Xu, X.X. Li,
Regularization methods for identifying the initial value of a time-fractional
pseudo-parabolic equation,
Calcolo 59 (2022) 47.
\url{https://doi.org/10.1007/s10092-022-00492-3}

\bibitem{DR_2023}
H. Di, W. Rong,
Regularized solution approximation of forward/backward problems for a
fractional pseudo-parabolic equation with random noise,
Acta Math. Sci. 43 (2023) 324--348.
\url{https://doi.org/10.1007/s10473-023-0118-3}

\bibitem{ST_2024}
D. Serikbaev, N. Tokmagambetov,
Determination of initial data in the time-fractional pseudo-hyperbolic
equation,
Symmetry 16 (2024) 1332.
\url{https://doi.org/10.3390/sym16101332}

\bibitem{Becker11}
L.C. Becker,
Resolvents and solutions of weakly singular linear Volterra integral
equations,
Nonlinear Anal. 74 (2011) 1892--1912.
\url{https://doi.org/10.1016/j.na.2010.10.060}

\bibitem{Stynes17}
M. Stynes, E. O'Riordan, J.L. Gracia,
Error analysis of a finite difference method on graded meshes for a
time-fractional diffusion equation,
SIAM J. Numer. Anal. 55 (2017) 1057--1079.
\url{https://doi.org/10.1137/16M1082329}

\bibitem{Altybay_26}
A. Altybay,
Numerical identification of a time-dependent coefficient in a
time-fractional diffusion equation with integral constraints,
Z. Angew. Math. Phys. 77 (2026) 41.
\url{https://doi.org/10.1007/s00033-025-02653-0}

\end{thebibliography}
\end{document}